\DeclareMathAlphabet{\eusm}{OT1}{eusm}{m}{n}
\DeclareMathAlphabet{\mathsl}{OT1}{cmr}{m}{sl}
\newtheorem{thm}{Theorem}[section]
\newtheorem{lem}[thm]{Lemma}
\newtheorem{prop}[thm]{Proposition}
\newtheorem{cor}[thm]{Corollary}
\newtheorem{defn}[thm]{Definition}
\newtheorem{rem}[thm]{Remark}
\newtheorem{example}[thm]{Example}
\newenvironment{proof}{\par\noindent{\bf Proof \,}}{$\hfill
\Box$\par\bigskip}
\title {A Generalization of Injective\\ and Projective Complexes}
\author{Tahire \"Ozen, Emine Yildirim}
\date{}
\begin{document}
\maketitle \abstract{In this paper it is given a generalization of projective and injective complexes.}

\section{Introduction}

\begin{defn} \label{1.1} ($\mathcal{X}-complex$ ) Let $\mathcal{X}$ be a class of R-modules. A
complex $\mathcal{C}: \ldots \longrightarrow C^{n-1} \longrightarrow
C^{n} \longrightarrow C^{n+1} \longrightarrow \ldots$ is called an
$\mathcal{X}-(cochain)$ complex, if  $C^i \in \mathcal{X}$ for all
$i \in \mathbb{Z}$. A complex $\mathcal{C}: \ldots \longrightarrow
C_{n+1} \longrightarrow C_{n} \longrightarrow C_{n-1}
\longrightarrow \ldots$ is called an $\mathcal{X}-(chain)$ complex,
if $C_i \in \mathcal{X}$ for all $i\in \mathbb{Z}$. The class of all
$\mathcal{X}-complexes$ is denoted by
$C(\mathcal{X^{*}})$.($\mathcal{X^{*}}=\mathcal{X}-complex$).
\end{defn}

\begin{defn} \label{1.2} ($\mathcal{X}-injective$ and
$\mathcal{X}-projective$ module) A left R-module E is
$\mathcal{X}-injective$, if $Ext^{1}(B/A,E)=0$ where for every
module $B/A \in \mathcal{X}$, whenever i is an injection and $f$ is
any map, there exists a map
$g$ making the following diagram commute; \\

\[\begin{diagram}
\node[2]{E}\\
\node{0} \arrow{e} \node{A} \arrow{n,t} {f} \arrow{e,r}{i} \node{B}
\arrow{nw,t,..} {g}
\end{diagram}\]

\noindent Let a left $R$-module $P$ is projective if,
$Ext^{1}(P,X)=0$ for every module $X \in \mathcal{X}$ where
$X=Ker{p}$, whenever $p$ is surjective and $h$ is any map, there
exists a map $g$ making the following diagram
commute; \\

\[\begin{diagram}
\node[2]{P} \arrow{sw,t,..} {g}\arrow{s,r}{h} \\
\node{A} \arrow{e,r}{p} \node{B} \arrow{e} \node{0}
\end{diagram}\] \\

\end{defn}
\begin{defn} \label{1.21} ($\mathcal{X}$-precover, $\mathcal{X}-preenvelope$ )
Let $\mathcal{A}$ be an abelian category and let $\mathcal{X}$ be a
class of objects of $\mathcal{A}$. Then for an object $M \in
\mathcal{A}$, a morphism $\phi: X \longrightarrow M$ where $X \in
\mathcal{X}$ is called an $\mathcal{X}$-precover of M, if $f: X'
\longrightarrow M$ where $X' \in \mathcal{X}$ and the following
diagram;

\[\begin{diagram}
\node{X'} \arrow{e,t}{f} \arrow{s,t,..}{g} \node{M} \\ \node{X}
\arrow{ne,r}{\phi}
\end{diagram}\] \\

\noindent can be completed to a commutative diagram.

Let $\mathcal{A}$ be an abelian category and let $\mathcal{X}$ be a
class of objects of $\mathcal{A}$. Then for an object \;$M \in
\mathcal{A}$\;a morphism \;$\phi: M \longrightarrow X$\; where $X
\in \mathcal{X}$ is called an \;$\mathcal{X}$-preenvelope of $M$, if
\;$f:M \longrightarrow  X'$\; where \;$X' \in \mathcal{X}$\; and the
following diagram;

\[\begin{diagram}
\node{M} \arrow{e,t}{f} \arrow{s,t}{\phi} \node{X'}\\
\node{X} \arrow{ne,r,..}{g}
\end{diagram}\] \\
\noindent can be completed to a commutative diagram.
\end{defn}

\noindent Every complex has an injective and projective resolution. In this paper 
Ext is defined the same as the class of R-modules.(See \cite{3})

\section{$\mathcal{X}-injective$ and $\mathcal{X}-projective$ complexes}

\begin{defn} \label{1.3} ($\mathcal{X}-injective$ and $\mathcal{X}-projective$ complexes) A
complex $\mathcal{C}$ is called an $\mathcal{X}-injective$ complex,
if $Ext^{1}(Y/X,C) = 0$ where for every complex $Y/X \in
C(\mathcal{X})$. Thus the following diagram commutes as follows;

\[\begin{diagram}
\node[2]{C}\\
\node{0} \arrow{e} \node{X} \arrow{n,t} {f} \arrow{e,r}{\phi}
\node{Y} \arrow{nw,t,..} {\widetilde{f}}
\end{diagram}\]

\noindent such that ${\widetilde{f}}{\phi}={f}$  where ${\phi}$ is a
monomorphism.

Dually we can define an $\mathcal{X}-projective$ complex. A complex
$\mathcal{C}$ is called an $\mathcal{X}-projective$ complex, if
$Ext^{1}(C,X) = 0$ for every complex $X \in \mathcal{C(X)}$ where
$Ker{\phi}=X \in \mathcal{C(X)}$. By diagram we can show as follows;

\[\begin{diagram}
\node[2]{C} \arrow{sw,t,..} {\widetilde{f}}\arrow{s,r}{f} \\
\node{A} \arrow{e,r}{\phi} \node{B} \arrow{e} \node{0}
\end{diagram}\]

\noindent such that ${\phi}{\widetilde{f}}={f}$ where ${\phi}$ is
onto.
\end{defn}

\begin{example} \label{1.4} If $P$ is an $\mathcal{X}-projetive(\mathcal{X}-injective)$ module, then
$\overline{P}:... \longrightarrow 0 \longrightarrow P
\longrightarrow P \longrightarrow 0 \longrightarrow 0
\longrightarrow ...$ is an
$\mathcal{X}-projective(\mathcal{X}-injective)$ complex. Also the
finite direct sum of $\mathcal{X}-projective(\mathcal{X}-injective)$
complexes is again $\mathcal{X}-projective(\mathcal{X}-injective)$.

Note that if $P$ is an
$\mathcal{X}$-injective($\mathcal{X}$-projective) module and $P$ is
not in the class $\mathcal{X}$, then $\overline{P}$ is an
$\mathcal{X}$-injective complex, but not an $\mathcal{X}$-complex.
So $\mathcal{X}$-injective complex may not be an
$\mathcal{X}$-complex.

\end{example}

\begin{proof}Consider the following diagram;

\[\begin{diagram}
\node[2]{P} \arrow{s,r}{f_{k}} \arrow{se,t}{1_p} \arrow{sw,t,..}{h_{k}} \\
\node{C_{k}} \arrow{e,r}{g_{k}} \arrow{se,r}{d_{k}} \node{C'_{k}}
\arrow{se,r}{d'_{k}} \node{P} \arrow{s,r}{f_{k-1}} \arrow{sw,t,..}{h_{k-1}} \\
\node[2]{C_{k-1}}  \arrow{e,r}{g_{k-1}} \node{C'_{k-1}}
\end{diagram}\]

\noindent where $g:C \longrightarrow C' \longrightarrow 0$ is an
epic morphism such that $Kerg \in \mathcal{X}$  and $f:\overline{P}
\longrightarrow C$ is a morphism and d, $d'$ are differentials of
$C$, $C'$, respectively. Since $P$ is an $\mathcal{X}-projective$
module, there exists a morphism $h_{k}:P \longrightarrow C_{k}$ with
${g_{k}}{h_{k}}={f_{k}}$.

Define $h_{k-1}:P \longrightarrow C_{k-1}$ with
$h_{k-1}={d_{k}}{h_{k}}$. Then ${g_{k-1}}{h_{k-1}}={f_{k-1}}$. Thus
we have a chain map $h:\overline{P} \longrightarrow C$, as required.
Dually, we can prove that if $P$ is an $\mathcal{X}-injective$
module, then $\overline{P}$ is an $\mathcal{X}-injective$ complex.
\end{proof}

\begin{defn} \label{1.5} (DG($\mathcal{X}$-injective) and DG($\mathcal{X}$-projective) complexes)
Let $\varepsilon$ be the class of exact complexes. A complex I is
called DG($\mathcal{X}$-injective), if each $I^{n}$ is
$\mathcal{X}$-injective and $\mathcal{H}om(E,I)$ is exact for all $E
\in \varepsilon$ where $d^{n}:E^{n} \longrightarrow E ^{n+1}$ with
$Kerd^{n} \in \mathcal{X}$.

A complex I is called $DG(\mathcal{X}-projective)$, if each $I^{n}$
is $\mathcal{X}-projective$ and $\mathcal{H}om(I,E)$ is exact for
all $E \in \varepsilon$ where $d^{n}:E^{n} \longrightarrow E^{n+1}$
with $Kerd^{n} \in \mathcal{X}$.
\end{defn}

\begin{lem} \label{1.6} Let $A \overset{\beta} \longrightarrow B \overset{\theta} \longrightarrow
C$ be an exact sequence where $Ker \theta \in \mathcal{X}$. Then for
all $\mathcal{X}-projective$ complex I,
$$Hom(I,A) \longrightarrow
Hom(I,B) \longrightarrow Hom(I,C)$$ is exact.
\end{lem}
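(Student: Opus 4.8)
The plan is to adapt the classical argument that $Hom(P,-)$ is exact when $P$ is projective. The two ingredients are the (elementary) left exactness of $Hom(I,-)$ on the category of complexes, and the fact that the $\mathcal{X}$-projective complex $I$ has the lifting property of Definition \ref{1.3} against every epimorphism whose kernel is an $\mathcal{X}$-complex. Throughout, write $\beta_{*},\theta_{*}$ for the maps induced on $Hom(I,-)$.

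The inclusion $Im\,\beta_{*}\subseteq Ker\,\theta_{*}$ is immediate: exactness at $B$ gives $Im\,\beta=Ker\,\theta$, so $\theta\beta=0$ and hence $\theta_{*}\beta_{*}=(\theta\beta)_{*}=0$. For the reverse inclusion, put $X:=Ker\,\theta$, which is an $\mathcal{X}$-complex by hypothesis; let $\iota:X\longrightarrow B$ be the inclusion and $\bar{\beta}:A\longrightarrow X$ the corestriction of $\beta$, which is epic because $Im\,\beta=X$, so that $\beta=\iota\bar{\beta}$. Given $g\in Hom(I,B)$ with $\theta g=0$, the map $g$ carries $I$ into the subcomplex $X$, so it corestricts to a chain map $g':I\longrightarrow X$ with $\iota g'=g$. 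Applying Definition \ref{1.3} to the epimorphism $\bar{\beta}:A\longrightarrow X$ (whose kernel is an $\mathcal{X}$-complex) and to the map $g'$ yields a chain map $h:I\longrightarrow A$ with $\bar{\beta}h=g'$; then $\beta h=\iota\bar{\beta}h=\iota g'=g$, so $g=\beta_{*}h\in Im\,\beta_{*}$. Hence $Ker\,\theta_{*}\subseteq Im\,\beta_{*}$ and the sequence is exact at $Hom(I,B)$.

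The point requiring care --- and what I expect to be the main obstacle --- is the application of the lifting property to $\bar{\beta}$: one needs $Ker\,\bar{\beta}=Ker\,\beta$ to be an $\mathcal{X}$-complex, so that $\bar{\beta}$ is genuinely of the type against which an $\mathcal{X}$-projective complex lifts. This is automatic when the given sequence is also left exact (then $Ker\,\beta=0$), and in general it is precisely the hypothesis under which the argument closes; the remaining steps --- the diagram chase above and the identification of $Ker\,\theta_{*}$ with the chain maps that factor through $\iota$ --- are formal, the latter being just the left exactness of $Hom(I,-)$.
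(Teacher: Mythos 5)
Your argument is the paper's argument: left exactness of $Hom(I,-)$ identifies $Ker\,\theta_{*}$ with the chain maps factoring through $Ker\,\theta=Im\,\beta$, and the lifting property of the $\mathcal{X}$-projective complex $I$ against the epimorphism $\bar{\beta}:A\longrightarrow Im\,\beta$ supplies the preimage. The obstacle you flag at the end is genuine, and it is exactly the point at which the paper's own proof is loose: the paper justifies the lift by saying ``$I$ is $\mathcal{X}$-projective and $Ker\,\theta\in\mathcal{X}$,'' but $Ker\,\theta$ is the \emph{codomain} of $\bar{\beta}$, not its kernel. The obstruction to lifting $g'$ lives in $Ext^{1}(I,Ker\,\beta)$, and $\mathcal{X}$-projectivity of $I$ kills that group only when $Ker\,\beta$ is an $\mathcal{X}$-complex, which the stated hypothesis does not provide.

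Moreover this is not a removable blemish: with only $Ker\,\theta\in\mathcal{X}$ the conclusion can fail. Over $\mathbb{Z}$ take $\mathcal{X}$ to be the class of $\mathbb{Q}$-vector spaces; these are injective, so every module is $\mathcal{X}$-projective and, by Example \ref{1.4}, every disk complex $\overline{P}$ is an $\mathcal{X}$-projective complex. For a free module $F$ mapping onto $\mathbb{Q}$, the sequence $F\longrightarrow\mathbb{Q}\longrightarrow 0$ is exact with $Ker(\mathbb{Q}\rightarrow 0)=\mathbb{Q}\in\mathcal{X}$, yet $Hom(\mathbb{Q},F)=0$ does not surject onto $Hom(\mathbb{Q},\mathbb{Q})\neq 0$ (and the same computation goes through for the disk complexes $\overline{F}\longrightarrow\overline{\mathbb{Q}}\longrightarrow 0$ with $I=\overline{\mathbb{Q}}$); here $Ker\,\beta$ is free, hence not in $\mathcal{X}$. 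So the hypothesis the lemma actually needs is $Ker\,\beta\in C(\mathcal{X})$ --- which is what holds in the lemma's only application, Example \ref{1.8}, where the three terms are consecutive terms of an exact complex all of whose kernels lie in $\mathcal{X}$. In short, your write-up reproduces the paper's proof and correctly locates the missing hypothesis; the gap is in the statement and in the paper's proof, not in your diagram chase.
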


\begin{proof} $0 \longrightarrow Ker{\theta} \overset{\beta} \longrightarrow B \overset{\theta} \longrightarrow
C$ is exact, so $$0 \longrightarrow Hom(I,Ker{\theta})
\longrightarrow Hom(I,B) \longrightarrow Hom(I,C)$$ is exact.

We have the following commutative diagram;

\[\begin{diagram} \node{A} \arrow{e,t} {\beta} \node{Im{\beta}} \arrow{e}
\node{0} \\ \node[2]{I} \arrow{n,t} {g} \arrow{nw,r,..} {f} \\
\end{diagram}\]

\noindent such that ${\beta}{f}={g}$.

\noindent Since I is $\mathcal{X}$-projective and $Ker \theta \in
\mathcal{X}$.

Therefore, $Hom(I,A) \longrightarrow Hom(I,B) \longrightarrow
Hom(I,C)$ is exact.
\end{proof}

\noindent Dually we can give the following lemma;

\begin{lem} \label{1.7} Let $A \overset{\beta} \longrightarrow B \overset{\theta} \longrightarrow
C$ be an exact sequence where $\frac{C}{Im{\theta}} \in
\mathcal{X}$. Then for all $\mathcal{X}-injective$ complex I,
$$Hom(C,I) \longrightarrow
Hom(B,I) \longrightarrow Hom(A,I)$$ is exact.
\end{lem}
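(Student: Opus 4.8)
The plan is to dualize the proof of Lemma~\ref{1.6} step by step. The hypothesis $C/\operatorname{Im}\theta \in \mathcal{X}$ is the exact dual of $\operatorname{Ker}\theta \in \mathcal{X}$, so the strategy is: first replace the given exact sequence by a short exact sequence to which we can apply left-exactness of $\operatorname{Hom}(-,I)$, and then use the defining property of an $\mathcal{X}$-injective complex to lift/extend a morphism across the remaining part.

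First I would observe that from $A \overset{\beta}\longrightarrow B \overset{\theta}\longrightarrow C$ exact at $B$ we get the short exact sequence $0 \longrightarrow \operatorname{Im}\theta \longrightarrow C \longrightarrow C/\operatorname{Im}\theta \longrightarrow 0$, and since $C/\operatorname{Im}\theta \in \mathcal{X}$ and $I$ is $\mathcal{X}$-injective, applying $\operatorname{Hom}(-,I)$ gives that
\[
\operatorname{Hom}(C,I) \longrightarrow \operatorname{Hom}(\operatorname{Im}\theta, I) \longrightarrow 0
\]
is exact, i.e. $\operatorname{Hom}(\operatorname{Im}\theta,I)$ is a quotient (via restriction along $\operatorname{Im}\theta \hookrightarrow C$). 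Equivalently, the sequence $0 \longrightarrow \operatorname{Hom}(C/\operatorname{Im}\theta,I)\longrightarrow \operatorname{Hom}(C,I)\longrightarrow \operatorname{Hom}(\operatorname{Im}\theta,I)\longrightarrow 0$ is exact. Dual to Lemma~\ref{1.6}, I also have the left-exact sequence $\operatorname{Hom}(C,I)\longrightarrow \operatorname{Hom}(B,I)\longrightarrow \operatorname{Hom}(A,I)$ coming from right-exactness of $A \to B \to \operatorname{Im}\theta \to 0$ (this part is formal, not needing $\mathcal{X}$-injectivity): a map $B \to I$ that vanishes on $\operatorname{Im}\beta = \operatorname{Ker}\theta$ factors through $B/\operatorname{Ker}\theta \cong \operatorname{Im}\theta$, hence through $C$ once we know $\operatorname{Hom}(\operatorname{Im}\theta,I)\to \operatorname{Hom}(C,I)$... wait, that is the wrong direction, so here is where $\mathcal{X}$-injectivity is essential.

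The key step, then, is the following. Take $\psi \in \operatorname{Hom}(B,I)$ whose image in $\operatorname{Hom}(A,I)$ is zero, i.e. $\psi\circ\beta = 0$; I must produce $\chi \in \operatorname{Hom}(C,I)$ with $\chi\circ\theta = \psi$. Since $\psi$ kills $\operatorname{Ker}\theta = \operatorname{Im}\beta$, it factors uniquely as $\psi = \bar\psi\circ\theta'$ where $\theta': B \twoheadrightarrow \operatorname{Im}\theta$ is the corestriction of $\theta$ and $\bar\psi: \operatorname{Im}\theta \to I$. Now I use that $I$ is $\mathcal{X}$-injective applied to the monomorphism $\operatorname{Im}\theta \hookrightarrow C$ whose cokernel $C/\operatorname{Im}\theta$ lies in $\mathcal{X}$: by Definition~\ref{1.3} there exists $\chi: C \to I$ with $\chi|_{\operatorname{Im}\theta} = \bar\psi$, i.e. $\chi\circ(\text{inclusion}) = \bar\psi$. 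Then $\chi\circ\theta = \chi\circ(\text{inclusion})\circ\theta' = \bar\psi\circ\theta' = \psi$, as required. Combined with the obvious inclusion $\operatorname{Im}(\operatorname{Hom}(C,I)\to\operatorname{Hom}(B,I)) \subseteq \operatorname{Ker}(\operatorname{Hom}(B,I)\to\operatorname{Hom}(A,I))$ (which holds since $\theta\circ\beta = 0$), this gives exactness at $\operatorname{Hom}(B,I)$.

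The main obstacle I anticipate is purely in the bookkeeping of which maps go which way: the naive "dualization" of Lemma~\ref{1.6} must take the commutative triangle there (a morphism $I \to \operatorname{Im}\beta$ lifted to $I \to A$) and replace it with a morphism $\operatorname{Im}\theta \to I$ extended to $C \to I$, and one has to be careful that the relevant $\mathcal{X}$-membership hypothesis ($C/\operatorname{Im}\theta \in \mathcal{X}$) is exactly what makes $\operatorname{Im}\theta \hookrightarrow C$ admissible for the $\mathcal{X}$-injective extension property. Everything else — the factorization of $\psi$ through $\operatorname{Im}\theta$, and the formal left-exactness — is routine diagram chasing in the abelian category of complexes, so I would present those quickly and spend the space making the extension step explicit, perhaps with the diagram
\[
\begin{diagram}
\node{0} \arrow{e} \node{\operatorname{Im}\theta} \arrow{e} \arrow{s,l}{\bar\psi} \node{C} \arrow{sw,r,..}{\chi} \\
\node[2]{I}
\end{diagram}
\]
witnessing the use of Definition~\ref{1.3}.
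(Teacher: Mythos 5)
Your proof is correct and follows the route the paper intends: the paper's entire proof of Lemma~\ref{1.7} is the single line ``It is clear from Lemma~\ref{1.6}'' (i.e.\ by duality), and what you have written is precisely that dualization carried out explicitly — factor $\psi$ through $B/\operatorname{Ker}\theta \cong \operatorname{Im}\theta$, then extend along $\operatorname{Im}\theta \hookrightarrow C$ using $\operatorname{Ext}^1(C/\operatorname{Im}\theta, I)=0$. Your observation that $C/\operatorname{Im}\theta \in \mathcal{X}$ is exactly the admissibility condition for the $\mathcal{X}$-injective extension step is the right one, and the argument is complete.
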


\begin{proof} It is clear from Lemma \ref{1.6}.
\end{proof}

\begin{example} \label{1.8} (DG($\mathcal{X}-injective$)(DG($\mathcal{X}-projective$)))
Let $I=.... \longrightarrow 0 \longrightarrow I^{0} \longrightarrow
0 \longrightarrow 0 \longrightarrow ...$ where $I^{0}$ is an
$\mathcal{X}$-injective($\mathcal{X}-projective$) module. Then I is
DG($\mathcal{X}-injective$)(DG($\mathcal{X}-projective$)) complex.
\end{example}

\begin{proof} Let $$E:... \longrightarrow E^{-1} \overset{d^{-1}} \longrightarrow E^{0} \overset{d^{0}} \longrightarrow
E^{1} \overset{d^{1}} \longrightarrow E^{2} \overset{d^{2}}
\longrightarrow E^{3} \longrightarrow...$$ is exact and $Kerd^{n}
\in \mathcal{X}$, then

$$\mathcal{H}om(E,I) \cong ... \longrightarrow Hom(E^{2},I^{0}) \longrightarrow Hom(E^{1},I^{0})
\longrightarrow Hom(E^{0},I^{0}) \longrightarrow ...$$.\\
\noindent By Lemma \ref {1.6} $\mathcal{H}om(E,I)$ is exact.
\end{proof}

\begin{lem} \label{1.9} If a complex $X: \ldots \longrightarrow X_{n+1} \longrightarrow X_{n} \longrightarrow
X_{n-1} \longrightarrow \ldots$ is an
$\mathcal{X}-injective$($\mathcal{X}-projective$) complex, then for
all $n \in \mathbb{Z}$ $X_{n}$ is an
$\mathcal{X}-injective$($\mathcal{X}-projective$) module.
\end{lem}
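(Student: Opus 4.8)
The plan is to deduce the module statement from the complex statement by testing the defining lifting property of an $\mathcal{X}$-injective (resp.\ $\mathcal{X}$-projective) complex against two-term ``disk'' complexes. I describe the $\mathcal{X}$-injective case in detail; the $\mathcal{X}$-projective case is obtained by dualizing.

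For a module $M$ and an integer $k$, let $D_{k}(M)$ be the complex with $M$ placed in degrees $k$ and $k-1$, with identity differential $D_{k}(M)_{k}\longrightarrow D_{k}(M)_{k-1}$, and $0$ elsewhere. The key elementary fact I would record first is the adjunction $Hom(D_{k}(M),X)\cong Hom(M,X_{k})$: a chain map $D_{k}(M)\longrightarrow X$ is completely determined by its degree-$k$ component, which may be prescribed arbitrarily (the degree-$(k-1)$ component is then forced to equal $d^{X}_{k}$ composed with it, and $d^{2}=0$ makes the remaining squares commute). Dually, $Hom(X,D_{k+1}(M))\cong Hom(X_{k},M)$.

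Now fix $n$ and let $i\colon A\longrightarrow B$ be a monomorphism of modules with $B/A\in\mathcal{X}$ and let $f\colon A\longrightarrow X_{n}$ be any $R$-map; I must extend $f$ along $i$. Applying $D_{n}(-)$ gives a monomorphism of complexes $D_{n}(i)\colon D_{n}(A)\longrightarrow D_{n}(B)$ whose cokernel is $D_{n}(B/A)$; since the only nonzero terms of $D_{n}(B/A)$ are copies of $B/A\in\mathcal{X}$, this cokernel lies in $C(\mathcal{X})$. The map $f$ corresponds under the adjunction to a chain map $\bar f\colon D_{n}(A)\longrightarrow X$. By the defining property of an $\mathcal{X}$-injective complex (Definition~\ref{1.3}) there is a chain map $\widetilde g\colon D_{n}(B)\longrightarrow X$ with $\widetilde g\circ D_{n}(i)=\bar f$; taking degree-$n$ components yields $g:=\widetilde g_{n}\colon B\longrightarrow X_{n}$ with $g\circ i=f$. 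Hence $X_{n}$ is an $\mathcal{X}$-injective module (Definition~\ref{1.2}) for every $n$. For the $\mathcal{X}$-projective statement I would run the mirror argument: given an epimorphism $p\colon A\longrightarrow B$ with $\ker p\in\mathcal{X}$ and a map $f\colon X_{n}\longrightarrow B$, apply $D_{n+1}(-)$ to get an epimorphism $D_{n+1}(p)\colon D_{n+1}(A)\longrightarrow D_{n+1}(B)$ with kernel $D_{n+1}(\ker p)\in C(\mathcal{X})$, lift the chain map $X\longrightarrow D_{n+1}(B)$ corresponding to $f$ through the $\mathcal{X}$-projectivity of $X$, and read off the degree-$n$ component.

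The step I expect to require the most care is verifying that $D_{n}(B/A)$ (resp.\ $D_{n+1}(\ker p)$) really is an object of $C(\mathcal{X})$: by Definition~\ref{1.1} every term of an $\mathcal{X}$-complex must lie in $\mathcal{X}$, including the zero terms, so this uses that $0\in\mathcal{X}$ and that $\mathcal{X}$ is closed under isomorphism---harmless standing hypotheses, but ones that should be made explicit. Everything else is the routine bookkeeping of chasing maps through the two-term complexes, which I would not spell out in full.
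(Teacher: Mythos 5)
Your proof is correct, but it takes a genuinely different route from the paper's. The paper works at the level of extensions: it forms the pushout $\frac{X_{n}\oplus M}{A}$ of $\alpha\colon N\to X_{n}$ along $i\colon N\to M$, assembles a short exact sequence of complexes $0\to X\to T\to S\to 0$ in which $T$ is $X$ with $X_{n}$ replaced by the pushout and $S$ is supported on $\frac{M}{N}$, invokes $Ext^{1}(S,X)=0$ to split it, and reads the desired extension of $\alpha$ off the degree-$n$ component of the retraction $\beta$. You instead transport the whole module-level lifting problem into the category of complexes via the disk functor $D_{n}(-)$ and the adjunction $Hom(D_{n}(M),X)\cong Hom(M,X_{n})$, and then apply the lifting formulation of Definition~\ref{1.3} directly. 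Your version buys two things. First, it sidesteps the one delicate point in the paper's construction: for $T$ to be a complex one must produce a differential $\frac{M\oplus X_{n}}{A}\to X_{n-1}$ agreeing with $d_{n}$ on $X_{n}$, which amounts to extending $d_{n}\alpha$ over $i$ and is not justified there (your disk complexes have identity differentials, so no such issue arises). Second, your projective case is an honest dualization, whereas the paper only treats the injective case. Both arguments share the same hidden hypothesis, namely that the auxiliary quotient (the paper's $S$, your $D_{n}(B/A)$ or $D_{n+1}(\ker p)$) lies in $C(\mathcal{X})$, which forces $0\in\mathcal{X}$ and closure under isomorphism; you flag this explicitly, the paper does not. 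Your argument is complete as a proof once those standing assumptions are recorded.
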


\begin{proof} Let $0 \longrightarrow N \overset{i} \longrightarrow
M$ and $\frac{M}{N} \in \mathcal{X}$ and $\alpha: N \rightarrow
X_{n}$ be linear form the pushout;

\[\begin{diagram}
\node{N} \arrow{e,t}{i} \arrow{s,t}{\alpha} \node{M} \arrow{s,t,..}{\gamma_{n}}\\
\node{X_{n}} \arrow{e,t,..}{\theta_{n}} \node{\frac{X_{n} \oplus M}{A}} \\
\end{diagram}\]

\noindent where $A=\{(\alpha(n),-i(n)): n \in N\}$.

By the following diagram;
\[\begin{diagram}
\node{0} \arrow{e} \node{X_{n + 1}} \arrow{e} \arrow{s} \node{X_{n +
1}} \arrow{e} \arrow{s} \node{0} \arrow{e} \arrow{s} \node{0} \\
\node{0} \arrow{e} \node{X_{n}} \arrow{e} \arrow{s} \node{\frac{{M
\oplus X_{n}}}{A}} \arrow{e} \arrow{s} \node{\frac{M}{N}} \arrow{e}
\arrow{s}  \node{0} \\
\node{0} \arrow{e} \node{X_{n - 1}} \arrow{e} \node{X_{n - 1}}
\arrow{e} \node{0} \arrow{e} \node{0}
\end{diagram}\]

\noindent We have the exact sequence $0 \longrightarrow X
\longrightarrow T \longrightarrow S\longrightarrow 0$ where

$T: ... \longrightarrow X_{n + 2} \longrightarrow  X_{n + 1}
\longrightarrow \frac{M \oplus X_{n}}{A} \longrightarrow X_{n - 1}
...$ and

$S: ... \longrightarrow 0 \longrightarrow  0 \longrightarrow \frac{M
}{N} \longrightarrow 0 ...$.

\noindent Since $X$ is a $\mathcal{X}-injective$ complex,
$Ext^{1}(S,X)=0$, and so

$0 \rightarrow Hom(S,X) \rightarrow Hom(T,X) \rightarrow Hom(X,X)
\rightarrow Ext^{1}(S,X)=0$. Therefore there exists $\beta_{n}:
T_{n}= \frac{M\oplus X_{n}}{A} \longrightarrow X_{n}$ such that
${\beta_{n}}{\theta_{n}}=1$. So

$${{\beta}^{n}}{\theta^{n}} (\alpha(n))=\alpha(n)$$
$${\beta}^{n}((\alpha(n),0)+A)=\alpha(n)$$
$${\beta}^{n}((0,i)+A)=\alpha(n)$$
$${{\beta}^{n}}{\gamma_{n}}i(n)=\alpha(n)$$

\noindent And hence ${{\beta}^{n}}{\gamma_{n}}i=\alpha$. So $X_{n}$
is a $\mathcal{X}-injective$ module.
\end{proof}

\begin{rem} \label{1.10} Let $X:... \rightarrow X_{n+1} \rightarrow X_{n} \rightarrow X_{n-1} \rightarrow ...$
be a complex such that $X_{n}$ are
$\mathcal{X}-injective$($\mathcal{X}-projective$) modules for all $n
\in \mathbb{Z}$. It need not to be that X is an
$\mathcal{X}-injective$($\mathcal{X}-projective$) complex.

Let R be an injective module and $f:R \rightarrow R$ be a $1-1$
morphism. Then we can find $g \neq 0$ and $g:R \rightarrow R$ such
that $gf=0$. We have the following diagram;

\[\begin{diagram}
\node{...} \arrow{e} \node{0} \arrow{e} \arrow{s} \node{R}
\arrow{e,t} {f} \arrow{s,r} {(f,0)} \node{R} \arrow{e} \arrow{s,r} {1} \node{0} \arrow{e} \arrow{s} \node{...} \\
\node{...} \arrow{e} \node{0} \arrow{e} \arrow{s} \node{R \oplus S}
\arrow{e,t} {(i,0)} \arrow{s} \node{R} \arrow{e} \arrow{s,r} {g}
\node{0} \arrow{e}
\arrow{s}  \node{...} \\
\node{...} \arrow{e} \node{0} \arrow{e} \node{0} \arrow{e} \node{R}
\arrow{e} \node{0} \arrow{e} \node{...}
\end{diagram}\]

\noindent Then $g(i,0) \neq 0$. So $\overline{R}$ cannot be an
$\mathcal{X}-injective$ complex.
\end{rem}

\noindent Dually, we can give an example for
$\mathcal{X}-projective$.

\begin{defn} \label{1.15} Let $\varepsilon$ be the class of exact complexes. Then
we can define $\varepsilon_{1}$ such that $\varepsilon_{1}$ is the
class of  exact complexes whose kernels  are in $\mathcal{X}$.
\end{defn}
\begin{lem} \label{1.16} Let $I$ be an $\mathcal{X}$-projective(injective)
module. Then $\underline{I}$ is in $\varepsilon_{1}^{\bot}(^{\bot}\varepsilon_{1})$.
\end{lem}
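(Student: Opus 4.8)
The plan is to turn membership in the orthogonal class of $\varepsilon_{1}$ into a splitting statement for short exact sequences of complexes, reduce that statement to a single degree, and there invoke the relative projectivity (respectively injectivity) of $I$. Throughout, $\underline{I}$ denotes the complex equal to $I$ in one fixed degree, say degree $0$, and $0$ in every other degree; for a complex $E$ I write $Z_{n}(E)=Ker\,d^{E}_{n}$ and $B_{n}(E)=Im\,d^{E}_{n+1}$. Fix $E\in\varepsilon_{1}$, so $E$ is exact and $Z_{n}(E)\in\mathcal{X}$ for all $n$.

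Suppose first that $I$ is $\mathcal{X}$-projective. It suffices to show that every short exact sequence of complexes $0\to E\to Z\overset{\pi}{\longrightarrow}\underline{I}\to 0$ splits. Since $\underline{I}$ is concentrated in degree $0$, we get $Z_{n}\cong E_{n}$ compatibly with differentials for $n\neq 0$, while in degree $0$ we obtain a module extension $0\to E_{0}\to Z_{0}\overset{\pi_{0}}{\longrightarrow} I\to 0$; moreover a chain-map splitting $\underline{I}\to Z$ is exactly an $R$-linear section $s_{0}$ of $\pi_{0}$ whose image lies in $Z_{0}(Z)=Ker\,d^{Z}_{0}$ (all other chain-map identities being vacuous). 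Restricting the degree-$0$ extension to cycles yields a short exact sequence of $R$-modules $0\to Z_{0}(E)\to Z_{0}(Z)\overset{\pi_{0}}{\longrightarrow} I\to 0$: its kernel is $Z_{0}(Z)\cap E_{0}=Z_{0}(E)$, and $\pi_{0}$ is surjective here because for $w\in Z_{0}$ the element $d^{Z}_{0}(w)\in E_{-1}$ is annihilated by $d^{E}_{-1}$, hence lies in $Z_{-1}(E)=B_{-1}(E)$ by exactness of $E$, say $d^{Z}_{0}(w)=d^{E}_{0}(e)$ with $e\in E_{0}\subseteq Z_{0}$, and then $w-e\in Z_{0}(Z)$ has the same $\pi_{0}$-image as $w$. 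As $Z_{0}(E)\in\mathcal{X}$ and $I$ is $\mathcal{X}$-projective, $Ext^{1}(I,Z_{0}(E))=0$, so this sequence splits; composing a splitting $I\to Z_{0}(E)$ with $Z_{0}(E)\hookrightarrow Z_{0}$ produces the required $s_{0}$, and the sequence of complexes splits.

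When $I$ is $\mathcal{X}$-injective the argument is dual: one shows that every short exact sequence of complexes $0\to\underline{I}\to Z\to E\to 0$ splits. In degree $0$ this is a module extension $0\to I\to Z_{0}\to E_{0}\to 0$ together with a lift $\delta:E_{1}\to Z_{0}$ of $d^{E}_{1}$, and what is needed is a module retraction $Z_{0}\to I$ that in addition annihilates $\delta(E_{1})$. Starting from any module retraction, which exists because $I$ is injective, the correction to be added is a map $E_{0}\to I$ extending a prescribed map on $B_{0}(E)$ along the inclusion $B_{0}(E)\hookrightarrow E_{0}$, and this is possible because $I$ is $\mathcal{X}$-injective and $E_{0}/B_{0}(E)\cong Z_{-1}(E)$ lies in $\mathcal{X}$, again by exactness of $E$.

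The step I expect to be the main obstacle is the passage to cycles (and dually to boundaries): one must pin down precisely which module extension in degree $0$ survives this passage and check that its $\varepsilon_{1}$-side is exactly a cycle module $Z_{n}(E)$, so that it belongs to $\mathcal{X}$ and the relative projectivity (respectively injectivity) of $I$ applies, while its $\underline{I}$-side is still all of $I$. Both points rest on the exactness of $E$ one degree away from where $I$ sits, and this is the only place where the hypothesis $E\in\varepsilon_{1}$, as opposed to merely being a complex with cycles in $\mathcal{X}$, is genuinely used; everything else is routine verification that the degreewise splitting is compatible with the differentials.
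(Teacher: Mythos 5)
Your argument is correct in substance and takes a genuinely different, and considerably more complete, route than the paper's own proof. The paper attempts to compute $Ext^{1}(E,\underline{I})$ from a right resolution $0\to \underline{I}\to I^{1}\to I^{2}\to I^{3}\to\cdots$ of the sphere complex by disk complexes, reduces a cocycle condition to the exactness of $Hom(E^{0},I^{0})\to Hom(E^{-1},I^{0})\to Hom(Ker\,d^{-1},I^{0})$, and then simply stops; the lifting is never completed. You instead read membership in the orthogonal class as the splitting of every extension against a complex in $\varepsilon_{1}$, reduce to the single degree where $I$ sits, and apply relative projectivity to the cycle module $Z_{0}(E)\in\mathcal{X}$ (respectively, relative injectivity to the extension problem along $B_{0}(E)\hookrightarrow E_{0}$ with quotient $Z_{-1}(E)\in\mathcal{X}$). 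This is cleaner, it actually closes the argument, and it isolates correctly the one place where exactness of $E$ adjacent to the degree of $I$ is needed --- precisely the point the paper's fragment never reaches.

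Two caveats. First, your pairing ($\mathcal{X}$-projective gives $^{\bot}\varepsilon_{1}$, $\mathcal{X}$-injective gives $\varepsilon_{1}^{\bot}$) is the reverse of the lemma's literal statement; it is nevertheless the consistent one, since the paper's own Lemma~\ref{1.11} reads $I\in\varepsilon_{1}^{\bot}$ as $Ext^{1}(E,I)=0$ and deduces $\mathcal{X}$-injectivity, and the tail of the paper's proof of the present lemma likewise invokes $\mathcal{X}$-injectivity of $I^{0}$ while treating $\varepsilon_{1}^{\bot}$. So the parenthetical alternatives in the statement are transposed, and you have resolved the typo in the right direction; you should say so explicitly rather than silently. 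Second, in the injective case you assert that an initial module retraction $Z_{0}\to I$ ``exists because $I$ is injective''; $I$ is only assumed $\mathcal{X}$-injective, so this step needs its own justification: $E_{0}$ sits in $0\to Z_{0}(E)\to E_{0}\to Z_{-1}(E)\to 0$ with both ends in $\mathcal{X}$, hence $Ext^{1}(E_{0},I)=0$ and the degreewise sequence splits. You should also record that the map prescribed on $B_{0}(E)$ is well defined, i.e.\ that the composite of the retraction with $d_{1}^{Z}$ kills $Z_{1}(E)$; this again uses exactness of $E$ one degree above $I$ together with $\underline{I}_{1}=0$. With these two repairs the proof is complete.
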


\begin{proof} Let
\[\begin{diagram}
\node{0} \arrow{e} \node{I} \arrow{e} \node{I^{1}} \arrow{e,t}
{\lambda_{1}} \node{I^{2}} \arrow{e,t} {\lambda_{2}} \node{I^{3}} \arrow{e} \node{...} \\
\end{diagram}\] be a projective resolution of $I$.

Then $Hom(E,I^{1}) \overset{\alpha} \rightarrow Hom(E,I^{2})
\overset{\beta} \rightarrow Hom(E,I^{3})$ where  $E:... \rightarrow
E^{-2} \rightarrow E^{-1} \rightarrow E^{0} \rightarrow E^{1}
\rightarrow...$\\

$Ext^{1}(E,I)=?$

\[\begin{diagram}
\node{E:...} \arrow{e} \node{E^{-3}} \arrow{e,t} {d^{-3}} \arrow{s}
\node{E^{-2}} \arrow{e,t} {d^{-2}} \arrow{s,r} {u^{-2}}
\node{E^{-1}} \arrow{e,t} {d^{-1}} \arrow{s,r} {u^{-1}} \node{E^{0}}
\arrow{e} \arrow{s} \node{...} \\
\node{I^{2}:...} \arrow{e} \node{0} \arrow{e} \arrow{s} \node{I^{0}}
\arrow{e,t} {1} \arrow{s,r} {1} \node{I^{0}} \arrow{e} \arrow{s} \node{0} \arrow{e} \arrow{s} \node{...} \\
\node{I^{3}:...} \arrow{e} \node{I^{0}} \arrow{e} \node{I^{0}}
\arrow{e} \node{0} \arrow{e} \node{0} \arrow{e} \node{...}\\
\end{diagram}\]

\noindent where ${{\lambda_{2}}^{-2}}{u^{-2}}=0$, since
${\lambda_{2}}^{-2}=1$, then $u^{-2}=0$ and also since
${u^{-2}}={u^{-1}}{d^{-2}}$, then $0={u^{-1}}{d^{-2}}$.

\[\begin{diagram}
\node{E:...} \arrow{e} \node{E^{-3}} \arrow{e,t} {d^{-3}} \arrow{s}
\node{E^{-2}} \arrow{e,t} {d^{-2}} \arrow{s,r} {u^{-2}}
\node{E^{-1}} \arrow{e,t} {d^{-1}} \arrow{s,r} {f^{-1}} \node{E^{0}}
\arrow{e} \arrow{s,r} {f^{0}}
\node{E^{1}} \arrow{e} \arrow{s} \node{...} \\
\node{I^{1}:...} \arrow{e} \node{0} \arrow{e} \arrow{s} \node{0}
\arrow{e} \arrow{s} \node{I^{0}}
\arrow{e,t} {1} \arrow{s,r} {1} \node{I^{0}} \arrow{e} \arrow{s} \node{0} \arrow{e} \arrow{s} \node{...} \\
\node{I^{2}:...} \arrow{e} \node{0} \arrow{e} \node{I^{0}}
\arrow{e} \node{I^{0}} \arrow{e} \node{0} \arrow{e} \node{0} \arrow{e} \node{...}\\
\end{diagram}\]

\noindent where ${f^{0}}{d^{-1}}={f^{-1}}$ and ${f^{-1}}{d^{-2}}=0$.

Since $Kerd^{-1} \overset{i} \rightarrow E^{-1} \overset{d^{-1}}
\rightarrow E^{0}$ is exact and $I^{0}$ is $\mathcal{X}$-injective,
then $Hom(E^{0},I^{0}) \overset{(d^{-1})^{*}} \rightarrow
Hom(E^{-1},I^{0}) \overset{i^{*}} \rightarrow Hom(Kerd^{-1},I^{0})$
is exact.

\end{proof}

\begin{cor} \label{1.17} Let $I=...\rightarrow 0 \rightarrow I_{n} \rightarrow I_{n-1} \rightarrow ... \rightarrow I_{0}
\rightarrow 0 \rightarrow ...$ where $I_{i}$ is an
$\mathcal{X}-projective(injective)$ module, then $I$ is in
${\varepsilon_{1}}^{\bot}(^{\bot}\varepsilon_{1})$.
\end{cor}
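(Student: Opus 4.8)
The plan is to induct on the length of $I$ and reduce everything to Lemma \ref{1.16}; I will treat the $\mathcal{X}$-projective case (membership in ${\varepsilon_{1}}^{\bot}$), the $\mathcal{X}$-injective case being entirely dual, with $Hom(-,E)$ in place of $Hom(E,-)$. First I would record that $\varepsilon_{1}$ is stable under the shift automorphism of the category of complexes: a shift of an exact complex is exact, and shifting does not alter which modules occur as kernels of the differentials. Since the shift is exact it preserves projective resolutions of complexes, so $Ext^{1}(E,C[1]) \cong Ext^{1}(E[-1],C)$ for all complexes $E,C$; combined with the shift-stability of $\varepsilon_{1}$, this shows ${\varepsilon_{1}}^{\bot}$ is itself shift-stable. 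In particular, Lemma \ref{1.16} then yields that every complex concentrated in a single degree at an $\mathcal{X}$-projective module lies in ${\varepsilon_{1}}^{\bot}$.

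Next I would induct on the largest $n$ with $I_{n} \neq 0$, after re-indexing so that the nonzero terms sit in degrees $0,\dots,n$. The case $n=0$ is the preceding paragraph. For $n \geq 1$, the stalk $\underline{I_{0}}$ embeds in $I$ as a subcomplex (the inclusion is $1_{I_{0}}$ in degree $0$ and $0$ elsewhere; the single nontrivial square commutes because the differential out of degree $0$ is forced to be zero), with quotient the complex $I': \cdots \to I_{n} \to \cdots \to I_{1} \to 0$ concentrated in degrees $1,\dots,n$. This yields a short exact sequence of complexes
$$0 \longrightarrow \underline{I_{0}} \longrightarrow I \longrightarrow I' \longrightarrow 0$$
which is split in each degree, so that applying $Hom(E,-)$ for an arbitrary $E \in \varepsilon_{1}$ produces a long exact sequence
$$Ext^{1}(E,\underline{I_{0}}) \longrightarrow Ext^{1}(E,I) \longrightarrow Ext^{1}(E,I').$$
Its outer terms vanish: $Ext^{1}(E,\underline{I_{0}})=0$ by the single-degree case, and $Ext^{1}(E,I')=0$ because $I'$ is a shift of a bounded complex of $\mathcal{X}$-projective modules of strictly smaller length, hence lies in ${\varepsilon_{1}}^{\bot}$ by the induction hypothesis and the shift-stability above. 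Thus $Ext^{1}(E,I)=0$ for all $E \in \varepsilon_{1}$, i.e. $I \in {\varepsilon_{1}}^{\bot}$.

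The real content is carried by Lemma \ref{1.16}; the points I would be careful about are the shift bookkeeping --- so that Lemma \ref{1.16} applies to a stalk placed in a nonzero degree and so that $I'$ genuinely counts as a shorter complex in ${\varepsilon_{1}}^{\bot}$ --- and the observation that the truncation sequence is degreewise split, which is what makes the long exact $Ext$-sequence available regardless of how $Ext$ of complexes is formally defined. I do not anticipate any serious obstacle beyond this bookkeeping.
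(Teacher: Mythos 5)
Your argument is correct and is essentially the paper's own proof written out in full: the paper simply observes that ${\varepsilon_{1}}^{\bot}$ is closed under extensions and invokes Lemma \ref{1.16}, while you make that extension-closure explicit via the degreewise-split truncation sequence $0 \to \underline{I_{0}} \to I \to I' \to 0$, induction on length, and shift-stability of ${\varepsilon_{1}}^{\bot}$. No substantive difference in approach, and your extra bookkeeping (shifts, the long exact $Ext$ sequence) is sound.
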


\begin{proof} Since ${\varepsilon_{1}}^{\bot}$ is extension closed,
by Lemma \ref{1.16} we can understand that $I$ is in
${\varepsilon_{1}}^{\bot}$.
\end{proof}

\begin{cor}Every left(right) bounded complex I where $I_{i}$ is an
$\mathcal{X}-projective(injective)$ module is in
${\varepsilon_{1}}^{\bot}(^{\bot}\varepsilon_{1})$.

\end{cor}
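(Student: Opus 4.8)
The plan is to bootstrap from Corollary~\ref{1.17} and Lemma~\ref{1.16}: I would realize a one-sidedly bounded complex of $\mathcal{X}$-projective (respectively $\mathcal{X}$-injective) modules as an inverse limit (respectively a direct union) of genuinely bounded complexes of the same type, and then transport membership in ${\varepsilon_1}^{\bot}$ (respectively ${}^{\bot}\varepsilon_1$) across that limit by means of the associated Milnor short exact sequence. I will describe the left bounded, $\mathcal{X}$-projective case; the right bounded, $\mathcal{X}$-injective case is entirely dual, with coproducts and direct limits in place of products and inverse limits and with $Hom(-,E)$ in place of $Hom(E,-)$ (there one may also simply quote Eklof's lemma, since ${}^{\bot}\varepsilon_1$ is a left orthogonal class).

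First I would normalize the setup. Let $I$ be left bounded with each $I_i$ an $\mathcal{X}$-projective module; say $I_i=0$ for all $i>N$. For $n\geq 0$ let $I^{(n)}$ be the brutal truncation of $I$ obtained by setting $I_i=0$ for every $i<-n$. Since the differential of $I$ lowers degree, each $I^{(n)}$ is a \emph{quotient} complex of $I$, supported in the finitely many degrees $-n,\ldots,N$, and with $\mathcal{X}$-projective entries; hence $I^{(n)}\in{\varepsilon_1}^{\bot}$ by Corollary~\ref{1.17}. The canonical surjection $I^{(n+1)}\longrightarrow I^{(n)}$ has kernel $\underline{I_{-n-1}}$, the complex concentrated in degree $-n-1$ with that single $\mathcal{X}$-projective entry, so $\underline{I_{-n-1}}\in{\varepsilon_1}^{\bot}$ by Lemma~\ref{1.16}. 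A routine degreewise computation shows $I=\varprojlim_n I^{(n)}$.

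Then I would fix $E\in\varepsilon_1$ and compute $Ext^{1}(E,I)$. Since the tower $(I^{(n)})_n$ has surjective transition maps, its ${\varprojlim}^{1}$ vanishes and there is a short exact sequence of complexes $0\longrightarrow I\longrightarrow \prod_n I^{(n)} \overset{1-s}{\longrightarrow} \prod_n I^{(n)}\longrightarrow 0$, with $s$ induced by the transition maps. Feeding this into the long exact sequence of $Hom(E,-)$ and using that ${\varepsilon_1}^{\bot}$ is closed under products --- so $Ext^{1}(E,\prod_n I^{(n)})=0$ --- identifies $Ext^{1}(E,I)$ with the cokernel of $1-s$ on $\prod_n Hom(E,I^{(n)})$, that is, with ${\varprojlim}^{1}$ of the tower $\bigl(Hom(E,I^{(n)})\bigr)_n$. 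To finish, I would apply $Hom(E,-)$ to $0\longrightarrow \underline{I_{-n-1}}\longrightarrow I^{(n+1)}\longrightarrow I^{(n)}\longrightarrow 0$: because $Ext^{1}(E,\underline{I_{-n-1}})=0$, the transition map $Hom(E,I^{(n+1)})\longrightarrow Hom(E,I^{(n)})$ is surjective, so this tower is Mittag--Leffler and its ${\varprojlim}^{1}$ is zero; hence $Ext^{1}(E,I)=0$, i.e. $I\in{\varepsilon_1}^{\bot}$. I expect the passage to the limit to be the only real obstacle here: extension-closedness of ${\varepsilon_1}^{\bot}$ (which already gave Corollary~\ref{1.17}) absorbs only finitely many truncation steps, and the extra ingredient needed for the remaining infinitely many is precisely the surjectivity of the $Hom$-tower transition maps, which itself rests on the vanishing of $Ext^{1}(E,-)$ on each one-step kernel $\underline{I_{-n-1}}$ provided by Lemma~\ref{1.16}; the truncation bookkeeping and the verification $I=\varprojlim_n I^{(n)}$ are routine.
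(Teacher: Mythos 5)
Your proposal is correct, but it is worth noting that the paper offers no proof of this corollary at all: it is stated bare, immediately after Corollary~\ref{1.17}, whose proof reads ``since ${\varepsilon_{1}}^{\bot}$ is extension closed, by Lemma~\ref{1.16}\dots''. The evident intention is that the same reasoning extends, but finite extension-closedness only absorbs finitely many truncation steps and therefore only proves the genuinely bounded case; for a complex that is unbounded in one direction some passage to a limit is unavoidable. Your argument supplies exactly that missing step, and it is the standard one: realize the left bounded complex as the inverse limit of its bounded quotient truncations (a surjective tower with one-term kernels $\underline{I_{-n-1}}\in{\varepsilon_1}^{\bot}$ by Lemma~\ref{1.16}), use the Milnor sequence $0\to I\to\prod_n I^{(n)}\to\prod_n I^{(n)}\to 0$ together with closure of ${\varepsilon_1}^{\bot}$ under products to identify $Ext^1(E,I)$ with ${\varprojlim}^1 Hom(E,I^{(n)})$, and kill the latter by Mittag--Leffler, the surjectivity of the $Hom$-tower being precisely the vanishing of $Ext^1(E,\underline{I_{-n-1}})$; dually, the right bounded injective case is a filtration with one-term subquotients, where Eklof's lemma (or the telescope) applies to the left orthogonal class ${}^{\bot}\varepsilon_1$. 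One small point to make explicit if you write this up: the pairing of the boundedness direction with the side of the orthogonality matters, since your inverse-limit mechanism needs the discarded tail to be a \emph{sub}complex (so that the truncations are quotients and the tower is surjective), while the dual mechanism needs the retained part to be a subcomplex; your normalization handles this correctly, but the paper's conventions are loose enough that the reader should be told which reading of ``left/right bounded'' is in force. In short, your proof is not merely a different route from the paper's --- it is the proof the paper omits.
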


\begin{lem} \label{1.11} If $I \in \varepsilon_{1}^\bot$, then each
$I^{n}$ is $\mathcal{X}-injective$ for each $n \in \mathbb{Z}$.
\end{lem}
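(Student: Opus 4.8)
The plan is to establish the module-level lifting property directly. Fix $n\in\mathbb{Z}$ and a monomorphism $0\longrightarrow A\overset{i}{\longrightarrow}B$ with $B/A\in\mathcal{X}$, and let $f:A\longrightarrow I^{n}$ be arbitrary; the goal is to extend $f$ along $i$. First I would form the pushout of $i$ and $f$, which yields an exact sequence of modules $0\longrightarrow I^{n}\overset{\theta}{\longrightarrow}Q\longrightarrow B/A\longrightarrow 0$ together with a map $\gamma:B\longrightarrow Q$ satisfying $\theta f=\gamma i$ (here $\theta$ is monic since $i$ is, and $\mathrm{coker}\,\theta\cong B/A$). It then suffices to split this short exact sequence, and the idea is to exhibit its class as coming from an $Ext^{1}$ computed in the category of complexes, where the hypothesis $I\in\varepsilon_{1}^{\bot}$ can be applied.

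Accordingly I would build a complex $T$ that coincides with $I$ in every degree except $n$ and $n+1$: put $T^{n}=Q$; let $T^{n+1}$ be the pushout of $d_{I}^{n}:I^{n}\longrightarrow I^{n+1}$ along $\theta$; and equip $T$ with the differentials $\theta\circ d_{I}^{n-1}:I^{n-1}\longrightarrow Q$, the pushout coprojection $Q\longrightarrow T^{n+1}$, and the map $T^{n+1}\longrightarrow I^{n+2}$ induced by $d_{I}^{n+1}$, the other differentials being inherited from $I$. One then checks routinely that $d\circ d=0$ throughout, that the obvious maps give a chain monomorphism $I\hookrightarrow T$ (monic in degree $n+1$ precisely because $\theta$ is monic), and that the cokernel complex $D:=T/I$ is the complex with $B/A$ in degrees $n$ and $n+1$ and the identity map as differential between them; hence $0\longrightarrow I\longrightarrow T\longrightarrow D\longrightarrow 0$ is exact. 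Now $D$ is exact, and its kernels are $B/A$ (in degree $n+1$) and $0$ elsewhere, so $D\in\varepsilon_{1}$; since $I\in\varepsilon_{1}^{\bot}$, $Ext^{1}(D,I)=0$, so this sequence splits. A retraction $\rho:T\longrightarrow I$ has degree-$n$ component $\rho^{n}:Q\longrightarrow I^{n}$ with $\rho^{n}\theta=1_{I^{n}}$, and then $\beta:=\rho^{n}\gamma:B\longrightarrow I^{n}$ satisfies $\beta i=\rho^{n}\theta f=f$. Thus $I^{n}$ is $\mathcal{X}$-injective.

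The step I expect to be the main obstacle is the construction of $T$: the differential $1_{B/A}$ of $D$ must genuinely lift to a differential of $T$, which forces the degree-$(n+1)$ term to be a second pushout rather than $I^{n+1}$ itself, and one must then confirm that $I\hookrightarrow T$ is an honest chain map with cokernel exactly $D$. Conceptually what is going on is the natural isomorphism $Ext^{1}(D,I)\cong Ext^{1}(B/A,I^{n})$, arising because $D$ is the disk complex on $B/A$ placed in degrees $n,n+1$ and this construction is exact and left adjoint to evaluation in degree $n$; the explicit sequence above merely makes that isomorphism concrete for the single extension class we need. (One should also read Definition \ref{1.15} as allowing the zero kernels of $D$, i.e.\ $0\in\mathcal{X}$, which is harmless.)
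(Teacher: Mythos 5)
Your proof is correct and follows essentially the same route as the paper's: push out $f$ along $i$ in degree $n$ to get $Q$ and $\theta$, embed $I$ into the modified complex with quotient the disk complex $D\in\varepsilon_{1}$ on $B/A$, split the resulting extension using $Ext^{1}(D,I)=0$, and read off the lift from the degree-$n$ component of the retraction. You are in fact more careful than the paper at the one delicate point: the paper keeps $\overline{I}^{\,n+1}=I^{n+1}$, which does not give a well-defined differential out of the pushout in degree $n$, whereas your second pushout in degree $n+1$ repairs exactly this.
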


\begin{proof} Let $S \subseteq M$ be a submodule of a module M with $\frac{M}{S} \in \mathcal{X}$ and $\alpha: S
\longrightarrow I_{n}$ be linear form the pushout;

\[\begin{diagram}
\node{S} \arrow{e,t}{i} \arrow{s,t}{\alpha} \node{M} \arrow{s,t,..}{i_{1}}\\
\node{I^{n}} \arrow{e,t,..}{i_{2}} \node{\frac{I^{n}\oplus M}{A}=I^{n}\oplus_{S} M}\\
\end{diagram}\]

\noindent where $A=\{(\alpha(s),-s): s\in S\}$. Thus $i_{2}$ is
one-to-one the same as i. Then $\overline{I}:... \longrightarrow
{I^{n-1}} \longrightarrow {I^{n}\oplus_{S} M} \longrightarrow
{I^{n+1}} \longrightarrow {I^{n+2}} \longrightarrow ...$ is a
complex.

\[\begin{diagram}
\node{0} \arrow{e} \node{I^{n-1}} \arrow{e} \arrow{s} \node{I^{n-1}}
\arrow{e} \arrow{s} \node{0} \arrow{e} \arrow{s} \node{0} \\
\node{0} \arrow{e} \node{I^{n}} \arrow{e} \arrow{s} \node{I^{n}
\oplus_{S} M} \arrow{e} \arrow{s} \node{\frac{M}{S}} \arrow{e}
\arrow{s}  \node{0} \\
\node{0} \arrow{e} \node{I^{n+1}} \arrow{e} \node{I^{n+1}} \arrow{e}
\node{\frac{M}{S}} \arrow{e} \node{0} \\
\end{diagram}\]

Therefore, we have an exact sequence $0 \longrightarrow I
\longrightarrow \overline{I} \longrightarrow E \longrightarrow 0$
where $E:...\longrightarrow \frac{M}{S} \longrightarrow \frac{M}{S}
\longrightarrow 0 \longrightarrow 0 \longrightarrow 0
\longrightarrow ...$ and so we have an exact sequence $0
\longrightarrow Hom(E,I) \longrightarrow Hom(\overline{I},I)
\longrightarrow Hom(I,I) \longrightarrow Ext'(E,I)=0$ since $I \in
\varepsilon_{1}^\bot$.

This implies that we can find $\overline{f}:\overline{I}
\longrightarrow I$ with $\overline{f}f=1$. Therefore, there exists a
function $\overline{f}^{n}:{I^{n} \oplus_{S} M} \longrightarrow
I^{n}$ with $\overline{f}^{n}f^{n}=1$. So,

$$\overline{f}^{n}f^{n} (\alpha(s))=\alpha(s)$$
$$\overline{f}^{n}((\alpha(s),0)+A)=\alpha(s)$$
$$\overline{f}^{n}((0,s)+A)=\alpha(s)$$
$$\overline{f}^{n}i_{1}i(s)=\alpha(s)$$

\noindent and hence ${\overline{f}_{n}}i_{1}i=\alpha$ and thus each
$I^{n} \in \mathcal{X}-injective$.

\end{proof}

\begin{lem} \label{1.12} Let $f:X \longrightarrow Y$ be a morphism of complexes.
Then the exact sequence $0 \longrightarrow Y \longrightarrow M(f)
\longrightarrow X[1] \longrightarrow 0$ associated with the mapping
cone $M(f)$ splits in $\mathcal{C(X)}$ if and only if f is homotopic
to 0.
\end{lem}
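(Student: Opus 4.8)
The plan is to argue with the explicit model of the mapping cone: in each degree $M(f)^n=X^{n+1}\oplus Y^n$, the differential is $d^{M(f)}(x,y)=(-d_X x,\ f(x)+d_Y y)$, the inclusion is $\iota\colon Y\to M(f),\ y\mapsto(0,y)$, and the projection is $\pi\colon M(f)\to X[1],\ (x,y)\mapsto x$, where $X[1]$ carries the differential $-d_X$. Since $X,Y\in C(\mathcal{X})$ and $\mathcal{X}$ is closed under finite direct sums (as already used implicitly in Example \ref{1.4}), $M(f)$ is again an $\mathcal{X}$-complex, so $0\to Y\to M(f)\to X[1]\to 0$ genuinely lives in $C(\mathcal{X})$. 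I would then use the standard reformulation: this short exact sequence splits in $C(\mathcal{X})$ precisely when there is a chain-map section $\sigma\colon X[1]\to M(f)$ with $\pi\sigma=1_{X[1]}$ (equivalently, a retraction onto $Y$), and since the splitting maps of a short exact sequence of complexes are automatically morphisms in $C(\mathcal{X})$, this is the only thing to produce.

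For the ``if'' direction, suppose $f\simeq 0$, so there are maps $s^n\colon X^n\to Y^{n-1}$ with $f=d_Y s+s d_X$. I would set $\sigma^n\colon X^{n+1}\to X^{n+1}\oplus Y^n,\ \sigma^n(x)=(x,-s^{n+1}(x))$. Then $\pi\sigma=1_{X[1]}$ is immediate on the nose, and comparing the two coordinates of $d^{M(f)}\sigma^n(x)$ and $\sigma^{n+1}d_{X[1]}(x)$ reduces the chain-map condition, in the $Y$-coordinate, exactly to the homotopy identity $f^{n+1}=d_Y s^{n+1}+s^{n+2}d_X^{n+1}$; the $X$-coordinate is trivially matched. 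Hence $\sigma$ is a morphism of complexes splitting the sequence.

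For the ``only if'' direction, assume a chain-map section $\sigma\colon X[1]\to M(f)$ with $\pi\sigma=1$. Since $\pi$ is the first projection, necessarily $\sigma^n(x)=(x,h^n(x))$ for uniquely determined $h^n\colon X^{n+1}\to Y^n$. Equating $Y$-coordinates in $d^{M(f)}\sigma^n=\sigma^{n+1}d_{X[1]}$ gives $f^{n+1}(x)=-d_Y h^n(x)-h^{n+1}d_X^{n+1}(x)$; after the reindexing $t^m:=-h^{m-1}\colon X^m\to Y^{m-1}$ this becomes $f^m=d_Y t^m+t^{m+1}d_X^m$, i.e.\ $f=d_Y t+t d_X$, so $f$ is homotopic to $0$.

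The only point requiring care is the sign bookkeeping in $d^{M(f)}$ and in the shift $X[1]$ — with a different convention the sign simply migrates onto $t$ — together with the remark that for a short exact sequence a left splitting (retraction) exists iff a right splitting (section) exists, so it is harmless to produce whichever of the two is more convenient. Everything else is the routine check that the displayed $\sigma$ commutes with differentials, which I would carry out degreewise using the formulas above.
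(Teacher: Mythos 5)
Your proof is correct and is the standard degreewise computation with the explicit model $M(f)^n=X^{n+1}\oplus Y^n$; the sign bookkeeping in both directions checks out, and the reduction of the chain-map condition for $\sigma$ to the homotopy identity is exactly right. The paper itself gives no argument here — it only cites \cite{3} — and what you have written is precisely the argument that reference supplies, so there is nothing to add.
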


\begin{proof} It follows from the proof by \cite{3}.
\end{proof}

\begin{lem} \label{1.13} Let $X$ and $I$ are complexes. If $Ext^{1}(X,I[n])=0$
for all $n \in \mathbb{Z}$, then $\mathcal{H}om(X,I)$ is exact.
\end{lem}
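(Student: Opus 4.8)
The plan is to unwind the definitions of the total Hom complex $\mathcal{H}om(X,I)$ and relate its cohomology in each degree to the groups $\mathrm{Ext}^1(X,I[n])$ together with homotopy classes of chain maps. Recall that for complexes $X$ and $I$, the complex $\mathcal{H}om(X,I)$ has in degree $n$ the group $\prod_{k}\mathrm{Hom}(X_k, I_{k+n})$ (the degree-$n$ maps $X\to I$), and a cycle in degree $n$ is precisely a chain map $X\to I[n]$, while a boundary in degree $n$ is precisely a chain map $X\to I[n]$ that is null-homotopic. Hence $H^n\big(\mathcal{H}om(X,I)\big)$ is exactly the group of chain homotopy classes of chain maps $X\to I[n]$; to show $\mathcal{H}om(X,I)$ is exact I must show every chain map $X\to I[n]$ is homotopic to $0$, for every $n\in\mathbb{Z}$.

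First I would fix $n$ and take an arbitrary chain map $f:X\to I[n]$; it suffices (by the standard identification just recalled) to prove $f\simeq 0$. The key device is the mapping cone: form $M(f)$, giving the short exact sequence of complexes $0\to I[n]\to M(f)\to X[1]\to 0$. By hypothesis $\mathrm{Ext}^1(X,I[n])=0$, and since $\mathrm{Ext}^1(X[1],I[n])\cong \mathrm{Ext}^1(X,I[n-1])=0$ as well (shifting is an exact autoequivalence of the complex category, so it just reindexes the $\mathrm{Ext}^1$ along the $n$-variable, which vanishes for all $n$), this extension splits. Then I would invoke Lemma~\ref{1.12}: the exact sequence $0\to I[n]\to M(f)\to X[1]\to 0$ associated to the mapping cone of $f$ splits precisely when $f$ is homotopic to $0$. (One small point to check: Lemma~\ref{1.12} is stated for the sequence $0\to Y\to M(f)\to X[1]\to 0$ with $Y$ the target of $f$; here the target is $I[n]$, so I apply it with $Y=I[n]$.) Therefore $f\simeq 0$, and since $n$ was arbitrary, every cohomology group of $\mathcal{H}om(X,I)$ vanishes, i.e. $\mathcal{H}om(X,I)$ is exact.

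The main obstacle — and the step deserving the most care — is matching the $\mathrm{Ext}^1$ hypothesis to the splitting criterion of Lemma~\ref{1.12}. Lemma~\ref{1.12} is phrased as ``splits in $\mathcal{C(X)}$,'' and one must be sure that ``$\mathrm{Ext}^1=0$'' in the ambient category of complexes genuinely forces the cone sequence to split as complexes; this is fine because $\mathrm{Ext}^1$ here is computed in the category of complexes of $R$-modules (as the introduction stipulates, following \cite{3}), and $\mathrm{Ext}^1(X[1],I[n])=0$ says exactly that every extension of $X[1]$ by $I[n]$ splits. The only other thing to be careful about is the bookkeeping of shifts: one needs the vanishing for \emph{all} $n$ precisely so that after applying the shift $X\mapsto X[1]$ the relevant $\mathrm{Ext}^1$ group is still among those assumed to vanish. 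With these two points handled, the argument is short: identify $H^n$ with homotopy classes of maps $X\to I[n]$, use the cone sequence plus the hypothesis to split it, and conclude via Lemma~\ref{1.12} that the class is zero.
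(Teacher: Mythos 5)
Your proposal is correct and follows essentially the same route as the paper: identify the cohomology of $\mathcal{H}om(X,I)$ with homotopy classes of chain maps $X\to I[n]$, split the mapping-cone sequence using the $\mathrm{Ext}^1$ vanishing, and conclude via Lemma~\ref{1.12}. The only cosmetic difference is indexing — the paper starts from $f:X[-1]\to I[n]$ so the cone sequence ends in $X$ and the hypothesis applies verbatim, whereas you start from $f:X\to I[n]$ and reindex the $\mathrm{Ext}^1$ group, which is equally valid since the vanishing is assumed for all $n$.
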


\begin{proof} Since $Ext^{1}(X,I(n))=0$, if $f: X[-1] \rightarrow
I[n]$ is a morphism, then $0 \rightarrow I[n] \rightarrow M(f)
\rightarrow X \rightarrow 0$ splits.

By \ref{1.12}, $f: X[-1] \rightarrow I[n]$ is homotopic to zero for
all n.

So $f^{1}: X \rightarrow I[n+1]$ is homotopic to zero for all $n \in
\mathbb{Z}$. Thus $\mathcal{H}om(X,I)$ is exact.
\end{proof}

\begin{cor} ${\varepsilon_{1}}^{\bot}(^{\bot}\varepsilon_{1}) \subseteq
DG(\mathcal{X}-projective(injective))$.
\end{cor}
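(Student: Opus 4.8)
The plan is to check, for an arbitrary complex $I \in \varepsilon_{1}^{\bot}$ (the case $I \in {}^{\bot}\varepsilon_{1}$ being entirely dual), the two requirements of Definition \ref{1.5}: that every component of $I$ lies in the appropriate module class, and that the relevant $\mathcal{H}om$-complex is exact against every member of $\varepsilon_{1}$. The first requirement is immediate, since it is exactly the content of Lemma \ref{1.11}: $I \in \varepsilon_{1}^{\bot}$ forces each $I^{n}$ to be $\mathcal{X}$-injective (and dually $I \in {}^{\bot}\varepsilon_{1}$ forces each $I^{n}$ to be $\mathcal{X}$-projective). So essentially all the work is concentrated in the second, ``$\mathcal{H}om$-acyclicity'' requirement.

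For that requirement I would first observe that $\varepsilon_{1}$ is closed under the shift functor: shifting a complex only re-indexes it (and negates differentials), so if $E$ is exact then every $E[m]$ is exact, and the kernel of each differential of $E[m]$ is, up to re-indexing, a kernel of a differential of $E$, hence still lies in $\mathcal{X}$; thus $E[m] \in \varepsilon_{1}$ for all $m \in \mathbb{Z}$. Combining this with the shift-invariance of $Ext$, namely $Ext^{1}(E,I[n]) \cong Ext^{1}(E[-n],I)$ (because the shift is an autoequivalence of the category of complexes), and applying $I \in \varepsilon_{1}^{\bot}$ to the complex $E[-n] \in \varepsilon_{1}$, we obtain $Ext^{1}(E,I[n]) = 0$ for every $n \in \mathbb{Z}$ and every $E \in \varepsilon_{1}$. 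Lemma \ref{1.13}, applied with $X = E$, then yields that $\mathcal{H}om(E,I)$ is exact for all $E \in \varepsilon_{1}$. Hence $I$ satisfies both clauses of Definition \ref{1.5}, so $I \in DG(\mathcal{X}\text{-injective})$; dually ${}^{\bot}\varepsilon_{1} \subseteq DG(\mathcal{X}\text{-projective})$, and together these give the asserted inclusion.

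The only delicate point, and the step I would write out with care, is the passage from ``$Ext^{1}(E,I) = 0$'' — which is all that membership in $\varepsilon_{1}^{\bot}$ literally supplies — to ``$Ext^{1}(E,I[n]) = 0$ for all $n$'', which is the hypothesis Lemma \ref{1.13} actually demands. This gap is bridged precisely by the shift-stability of $\varepsilon_{1}$ recorded above, together with the invariance of $Ext^{1}$ under a common shift of both arguments. Granting that, the corollary is just the conjunction of Lemma \ref{1.11} (components) and Lemma \ref{1.13} ($\mathcal{H}om$-exactness), with no further computation needed; one could also remark that Corollary \ref{1.17} and its successor are the one-sided-bounded instances of the reverse-type containments and are not required here.
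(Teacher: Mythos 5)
Your proposal is correct and follows essentially the same route as the paper, whose entire proof is the citation of Lemma \ref{1.11} (each component is $\mathcal{X}$-injective) and Lemma \ref{1.13} ($\mathcal{H}om$-exactness); you merely make explicit the shift-stability of $\varepsilon_{1}$ and the identity $Ext^{1}(E,I[n])\cong Ext^{1}(E[-n],I)$ needed to invoke Lemma \ref{1.13}, which the paper leaves tacit. Note only that your pairing of $\varepsilon_{1}^{\bot}$ with the injective side (and ${}^{\bot}\varepsilon_{1}$ with the projective side) is the one forced by Lemma \ref{1.11}, so the parenthetical order in the corollary's statement appears to be transposed in the paper.
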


\begin{proof} It follows from Lemma 2.13 and 2.15.
\end{proof}
\begin{cor}Every left(right) bounded complex I where $I_{i}$ is an
$\mathcal{X}-projective(injective)$ module is in
$DG(\mathcal{X}-projective(injective))$.

\end{cor}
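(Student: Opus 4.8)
The plan is to deduce this corollary from the immediately preceding results by a short chain of inclusions. First I would recall that the previous corollary (the one stated right after Lemma~\ref{1.11}, together with Corollary~\ref{1.17} and its companion) already shows that every left bounded complex $I$ with each $I_i$ an $\mathcal{X}$-projective module lies in ${\varepsilon_1}^{\bot}$, and dually every right bounded complex with each $I_i$ an $\mathcal{X}$-injective module lies in $^{\bot}\varepsilon_1$. So the only thing left to invoke is the inclusion ${\varepsilon_1}^{\bot}(^{\bot}\varepsilon_1)\subseteq DG(\mathcal{X}\text{-projective(injective)})$ from the corollary just above (which in turn rests on Lemma~\ref{1.9}, Lemma~\ref{1.11}, and Lemma~\ref{1.13}).

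Concretely, the steps in order are: (1) cite the bounded-complex corollary to place $I$ in ${\varepsilon_1}^{\bot}$ (projective case) or $^{\bot}\varepsilon_1$ (injective case); (2) cite the corollary ${\varepsilon_1}^{\bot}(^{\bot}\varepsilon_1)\subseteq DG(\mathcal{X}\text{-projective(injective)})$ to conclude membership in the DG class; (3) observe that the two cases are formally dual, so it suffices to write out one of them. I would also remark, for the reader's benefit, why those earlier results apply: Lemma~\ref{1.11} gives that each term $I^n$ is $\mathcal{X}$-injective (resp. Lemma~\ref{1.9} on the projective side gives each $I_n$ is $\mathcal{X}$-projective), and Lemma~\ref{1.13} upgrades the $\mathrm{Ext}^1$-vanishing against $\varepsilon_1$ to exactness of $\mathcal{H}om(E,I)$ (resp. $\mathcal{H}om(I,E)$) for every exact $E$ whose kernels lie in $\mathcal{X}$, which is exactly the defining condition for $DG(\mathcal{X}\text{-injective})$ (resp. $DG(\mathcal{X}\text{-projective})$) in Definition~\ref{1.5}.

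Since no new construction is required, there is essentially no technical obstacle; the main point to be careful about is bookkeeping — making sure the chain of cited corollaries is invoked in the right order and that the projective/injective and left/right dualities are matched correctly (bounded-below complexes of $\mathcal{X}$-projectives go to $DG(\mathcal{X}$-projective), bounded-above complexes of $\mathcal{X}$-injectives go to $DG(\mathcal{X}$-injective)). If one wanted a self-contained argument rather than a citation, the only mild subtlety would be handling a complex that is bounded on one side but unbounded on the other: one would write it as a (possibly infinite) increasing union of its brutal truncations, each of which is a bounded complex of $\mathcal{X}$-projectives and hence in ${\varepsilon_1}^\bot$ by Corollary~\ref{1.17}, and then use that ${\varepsilon_1}^\bot$ is closed under the relevant colimits (or directly that $\mathrm{Hom}(E,-)$ commutes with these colimits since $E$ is bounded), but since the statement as phrased says ``left(right) bounded'' — meaning bounded on one side — Corollary~\ref{1.17} already covers exactly this case after a degree shift, so the one-line proof ``It follows from Corollary~\ref{1.17} and the preceding corollary'' is legitimate and is the route I would take.
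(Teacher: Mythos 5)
Your proposal matches the paper's own proof, which is the one-line citation ``We can understand by Corollary 2.12 and Corollary [2.]16'' --- i.e.\ first place the bounded complex in ${\varepsilon_1}^{\bot}$ ($^{\bot}\varepsilon_1$) via the earlier bounded-complex corollary built on Lemma~\ref{1.16} and Corollary~\ref{1.17}, then apply the inclusion ${\varepsilon_1}^{\bot}(^{\bot}\varepsilon_1)\subseteq DG(\mathcal{X}\text{-projective(injective)})$ coming from Lemmas~\ref{1.11} and~\ref{1.13}. Your extra remarks on duality and truncations only elaborate on what the paper leaves implicit; the route is the same.
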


\begin{proof} We can understand by Corollary 2.12 and
Corollary 1.16.
\end{proof}

\begin{lem} \label{1.14} Let $X$ be extension closed and let $I$ be a
DG($\mathcal{X}$-injective)(DG($\mathcal{X}$)-projective)
complex($proj \in \mathcal{X}$?). Then $Ext^{1}(E,I[n])=0$ for all
$n \in \mathbb{Z}$ where $E$ is exact and $Ker\lambda_{1} \in
\mathcal{X}$ with $\lambda_{n}: E_{n} \rightarrow E_{n-1}$.
\end{lem}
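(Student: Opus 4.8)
The plan is to reduce the statement about $\mathrm{Ext}^1$ in the category of complexes to the homotopy-theoretic characterization already available from Lemmas~\ref{1.12} and~\ref{1.13}, but run in the opposite direction: instead of deducing exactness of $\mathcal{H}om$ from vanishing of $\mathrm{Ext}^1$, I would deduce vanishing of $\mathrm{Ext}^1(E,I[n])$ from the exactness built into the definition of $\mathrm{DG}(\mathcal{X}\text{-injective})$. So the first step is to fix $n$ and take an arbitrary short exact sequence of complexes $0 \to I[n] \to M \to E \to 0$; by the mapping-cone description (Lemma~\ref{1.12}) it suffices to show this is isomorphic to the sequence $0 \to I[n] \to M(f) \to E \to 0$ for some chain map $f : E[-1] \to I[n]$, and then that every such $f$ is null-homotopic, since by Lemma~\ref{1.12} null-homotopy of $f$ is equivalent to the splitting of the associated cone sequence.

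\smallskip

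The heart of the argument is therefore: given a chain map $f : E[-1] \to I[n]$ with $E$ exact and $\mathrm{Ker}\,\lambda_1 \in \mathcal{X}$, show $f \simeq 0$. Here I would use the hypothesis that $I$ is $\mathrm{DG}(\mathcal{X}\text{-injective})$ in the form of Definition~\ref{1.5}: each $I^m$ is $\mathcal{X}$-injective and $\mathcal{H}om(E',I)$ is exact for every exact complex $E'$ whose differentials have kernels in $\mathcal{X}$. Since $E$ is such a complex (and shifting preserves exactness and the kernel condition, using that $\mathcal{X}$ is extension closed to handle the shifted kernels if necessary), $\mathcal{H}om(E[-1],I[n]) \cong \mathcal{H}om(E,I)[\text{shift}]$ is exact, which says precisely that every chain map $E[-1] \to I[n]$ is null-homotopic. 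That gives $f \simeq 0$, hence the cone sequence splits, hence $\mathrm{Ext}^1(E,I[n]) = 0$. One runs this for every $n \in \mathbb{Z}$, and the extension-closedness of $\mathcal{X}$ is exactly what guarantees that the kernel-in-$\mathcal{X}$ condition is stable under the shift $E \mapsto E[-1]$ and under the cocycle/coboundary manipulations needed to stay inside the class $\varepsilon_1$ of Definition~\ref{1.15}.

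\smallskip

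I expect the main obstacle to be the bookkeeping around exactly which exact complexes have all their kernels in $\mathcal{X}$. The hypothesis of the lemma only directly asserts $\mathrm{Ker}\,\lambda_1 \in \mathcal{X}$ for the single differential $\lambda_1$; to invoke Definition~\ref{1.5} one needs $\mathrm{Ker}\,\lambda_m \in \mathcal{X}$ for \emph{all} $m$, or at least for enough $m$ to run the dimension-shifting. Since $E$ is exact, $\mathrm{Ker}\,\lambda_{m-1} \cong \mathrm{Im}\,\lambda_m$ and there are short exact sequences $0 \to \mathrm{Ker}\,\lambda_m \to E_m \to \mathrm{Ker}\,\lambda_{m-1} \to 0$; this is where extension-closedness of $\mathcal{X}$ (the standing hypothesis ``$X$ be extension closed'') does the real work, propagating membership in $\mathcal{X}$ along the complex from the one known kernel. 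The secondary technical point is the identification $\mathcal{H}om(E[-1],I[n]) \simeq \mathcal{H}om(E,I)$ up to degree shift together with the translation ``$\mathcal{H}om$ exact in degree $0$'' $\Longleftrightarrow$ ``every chain map is null-homotopic,'' which is the same fact used implicitly in the proof of Lemma~\ref{1.13} and can be cited from \cite{3}. Once those two points are in hand the proof is short.
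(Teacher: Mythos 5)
Your overall strategy --- reduce $Ext^{1}(E,I[n])=0$ to the assertion that every chain map into a shift of $I$ is null-homotopic, and extract the null-homotopy from the exactness of $\mathcal{H}om(E,I)$ that is built into Definition~\ref{1.5}, via the cone criterion of Lemma~\ref{1.12} --- is the standard argument (the one in \cite{3}) and is genuinely different from what the paper attempts. The paper instead takes a projective resolution $\cdots\to P_{1}\to P_{0}\to E\to 0$ in the category of complexes, lifts a cocycle $\beta$ degreewise using Lemma~\ref{1.6} and the $\mathcal{X}$-injectivity of each component of $I$, and then tries to correct the degreewise liftings $\theta^{n}$ to a chain map still satisfying $\theta f_{1}=\beta$; that correction is never completed (the written proof ends with the unresolved question ``How can we do?''). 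So your route is the one that can actually be closed, and it is also the direction for which Lemmas~\ref{1.12} and~\ref{1.13} were clearly set up.

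There is, however, a genuine gap at your first reduction. Lemma~\ref{1.12} concerns only the sequence $0\to Y\to M(f)\to X[1]\to 0$ attached to a mapping cone, and such sequences are precisely the \emph{degreewise split} extensions; an arbitrary representative $0\to I[n]\to M\to E\to 0$ of a class in $Ext^{1}(E,I[n])$ need not be degreewise split, so you may not assume it is isomorphic to a cone sequence. This is exactly where the first half of Definition~\ref{1.5} must be invoked: each $E_{m}$ sits in $0\to Ker\lambda_{m+1}\to E_{m+1}\to Ker\lambda_{m}\to 0$ with both ends in $\mathcal{X}$, so extension-closedness gives $E_{m+1}\in\mathcal{X}$, and then the $\mathcal{X}$-injectivity of the components of $I[n]$ forces each degreewise extension to split; only after that is $M\cong M(f)$ for some chain map $f$ and Lemma~\ref{1.12} applicable. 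Relatedly, your claim that extension-closedness ``propagates membership in $\mathcal{X}$ along the complex from the one known kernel'' is not correct: closure under extensions lets you pass from the kernels to the terms $E_{m}$, not from one kernel to the next (that would require closure under kernels of epimorphisms). To run the argument one really needs \emph{all} kernels of $E$ in $\mathcal{X}$, i.e.\ $E\in\varepsilon_{1}$ in the sense of Definition~\ref{1.15}, which is presumably what the imprecisely stated hypothesis intends. With that reading, and with the degreewise splitting step inserted, your proof goes through.
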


\begin{proof} Let $... \longrightarrow P_{2} \overset{f_{2}} \longrightarrow
P_{1} \overset{f_{1}} \longrightarrow P_{0}
 \overset{f_{0}} \longrightarrow E \longrightarrow 0$
be a projective resolution of E. Then we have

$$Hom(P_{0},I) \overset{{f_{1}}^{*}}
\longrightarrow Hom(P_{1},I) \overset{{f_{2}}^{*}} \longrightarrow
Hom(P_{2},I)$$.

Let ${\beta} \in Ker{{f_{2}}^{*}}$. Is ${\beta} \in
Im{{f_{1}}^{*}}?$

Since ${P_{2}}^{n} \longrightarrow {P_{1}}^{n} \longrightarrow
{P_{0}}^{n}$ is exact and $E^{n} \in \mathcal{X}$ and $I^{n}$ is
$\mathcal{X}-injective$ by Lemma 2.6

$$Hom({P_{0}}^{n},I^{n}) \overset{{{f_{1}}^{n}}^{*}}
\longrightarrow Hom({P_{1}}^{n},I^{n}) \overset{{f_{2}}^{*}}
\longrightarrow Hom({P_{2}}^{n},I^{n})$$ is exact.

${{f_{2}}^{*}}{\beta}=0$ implies that ${\beta}{f_{2}}=0$

$$\Rightarrow {{\beta}^{n}}{{f_{2}}^{n}}=0$$, for all $n \in \mathbb{Z}$

$$\Rightarrow {{f_{2}}^{*}}{{\beta}^{n}}=0$$

$$\Rightarrow {{\beta}^{n}} \in Ker{{f_{2}}^{*}}=Im{{f_{1}}^{*}}$$

$$\Rightarrow \exists {\theta^{n}} \in Hom(P^{0},I^{n})$$ such that
$${{\theta}^{n}}{{f_{1}}^{n}}={{\beta}^{n}}$$

Is $\theta$ a chain map?

We know that $\mathcal{H}om(E,I)$ is exact,

\begin{enumerate}[{\bf(i)}] \item Is $\mathcal{H}om(P_{0},I)$ exact?(where $I$ is
$\mathcal{X}-injective$ and $P_{0} \longrightarrow E \longrightarrow
0$ and $E^{n} \in \mathcal{X}$)

\noindent \item Let $\mathcal{H}om(P_{0},I)$ be exact. We have
${\theta^{n}}: {P_{0}}^{n} \longrightarrow I^{n}$. Is it necessary
$\theta$ is a chain map?
\end{enumerate}

\[\begin{diagram}
\node{P_{0}^{n-1}} \arrow{e,t} {\lambda^{n-1}} \arrow{s,r}
{\theta^{n-1}} \node{P_{0}^{n}} \arrow{e,t} {\lambda^{n}}
\arrow{s,r} {\theta^{n}} \arrow{sw,..} \node{P_{0}^{n +
1}} \arrow{s,r} {\theta^{n+1}} \arrow{sw,..} \\
\node{I^{n-1}} \arrow{e,t} {\gamma^{n-1}} \node{I^{n}} \arrow{e,t}
{\gamma^{n}} \node{I^{n+1}} \\
\end{diagram}\]

\[\begin{diagram}
\node{P_{0}^{n-1}} \arrow{e,t} {\lambda^{n-1}} \arrow{s,r} {t^{n-1}}
\node{P_{0}^{n}} \arrow{e,t} {\lambda^{n}} \arrow{s,r} {t^{n}}
\arrow{sw,r,..} {s^{n}}
\node{P_{0}^{n+1}} \arrow{s,r} {t^{n+1}} \arrow{sw,r,..} {s^{n+1}} \\
\node{I^{n}} \arrow{e,t} {\gamma^{n}} \node{I^{n+1}} \arrow{e,t}
{\gamma^{n+1}} \node{I^{n+2}} \\
\end{diagram}\]

where
$t^{n-1}={{\theta^{n}}{\lambda^{n-1}}-{\gamma^{n-1}}{\theta^{n-1}}}$
and  $t^{n}={{\theta^{n+1}}{\lambda^{n}}-{\gamma^{n}}{\theta^{n}}}$

Since $\mathcal{H}om(P_{0},I[n])$ is exact, we have a homotopy such
that
$${s^{n+1}}{\lambda^{n}}+{\gamma^{n}}{s^{n}}={\theta^{n+1}}{\lambda^{n}}-{\gamma^{n}}{\theta^{n}}$$
$$\gamma^{n}(s^{n}+\theta^{n})=(\theta^{n+1}+s^{n+1})\lambda^{n}$$\\
So we have a chain map. But we investigate a chain map such that
$${{\theta}^{n}}{{f_{1}}^{n}}={{\beta}^{n}}$$. How can we do?

\end{proof}

\begin{lem} \label{1.19} Let $f:X \longrightarrow Y$ a chain morphism, $Y$ is an
$\mathcal{X}$ complex and X is an $\mathcal{X}-projective$ complex.
Then $f$ is a homotopic to zero.
\end{lem}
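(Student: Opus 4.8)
The strategy is to use the same machinery that the earlier lemmas in this section set up, in particular Lemma~\ref{1.13} together with the mapping cone characterization of Lemma~\ref{1.12}. First I would form the mapping cone $M(f)$, which sits in the short exact sequence of complexes $0 \longrightarrow Y \longrightarrow M(f) \longrightarrow X[1] \longrightarrow 0$. By Lemma~\ref{1.12} it suffices to show that this sequence splits in $\mathcal{C(X)}$, which in turn (by the usual interpretation of $\mathrm{Ext}^1$ via short exact sequences) reduces to showing $\mathrm{Ext}^1(X[1],Y)=0$, or equivalently $\mathrm{Ext}^1(X,Y[-1])=0$.

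Next I would exploit that $X$ is an $\mathcal{X}$-projective complex. By Definition~\ref{1.3}, $\mathrm{Ext}^1(X,X')=0$ whenever $X'$ is the kernel of an epimorphism of complexes with $\mathcal{X}$-kernel; more to the point, since $Y$ is an $\mathcal{X}$-complex (each $Y^n\in\mathcal{X}$), the shifted complex $Y[-1]$ is again an $\mathcal{X}$-complex, and $\mathcal{X}$-complexes are exactly the objects against which $\mathcal{X}$-projectivity is tested. So I would argue that $\mathrm{Ext}^1(X,Y[-1])=0$ directly from the defining lifting property of an $\mathcal{X}$-projective complex applied to a suitable epimorphism onto $Y[-1]$ (for instance, the one coming from a projective cover or from the trivial epimorphism $M(f)\to X[1]$ reread appropriately). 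Combining this with the previous paragraph yields that $0 \longrightarrow Y \longrightarrow M(f) \longrightarrow X[1] \longrightarrow 0$ splits in $\mathcal{C(X)}$, and hence by Lemma~\ref{1.12} that $f$ is homotopic to zero.

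If one prefers to avoid the shift-bookkeeping, an alternative is to invoke Lemma~\ref{1.13}: since $Y$ is an $\mathcal{X}$-complex, for every $n$ the complex $Y[n]$ is still an $\mathcal{X}$-complex, so $\mathrm{Ext}^1(X,Y[n])=0$ for all $n\in\mathbb{Z}$ by $\mathcal{X}$-projectivity of $X$ (applied to each $\mathcal{X}$-complex $Y[n]$ regarded as a kernel of an epimorphism); Lemma~\ref{1.13} then gives that $\mathcal{H}om(X,Y)$ is exact, and exactness of the Hom-complex in degree $1$ (i.e.\ $H^1(\mathcal{H}om(X,Y))=0$) says precisely that every chain map $X\to Y$ is homotopic to zero. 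Either route funnels through the same two ingredients.

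The main obstacle I anticipate is the degree/shift matching in the first approach: one must be careful that $Y$ being an $\mathcal{X}$-complex really does put $Y[-1]$ (and all shifts $Y[n]$) in the class against which $\mathcal{X}$-projectivity of $X$ is tested, and that the relevant epimorphism has kernel in $C(\mathcal{X})$, since Definition~\ref{1.3} phrases $\mathcal{X}$-projectivity in terms of $\mathrm{Ext}^1(C,X)=0$ with $X=\ker\phi\in\mathcal{C(X)}$ rather than for an arbitrary $\mathcal{X}$-complex target. Resolving this may require first observing that every $\mathcal{X}$-complex arises as such a kernel (e.g.\ embed it into an $\mathcal{X}$-injective or projective-envelope complex), after which the lifting property applies and the argument closes.
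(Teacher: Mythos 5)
Your proposal is correct in substance but follows a genuinely different route from the paper. The paper does not pass through Lemma~\ref{1.12} or any $\mathrm{Ext}$-vanishing at all: it takes the mapping cone of the \emph{identity} on $Y$, giving the short exact sequence $0 \to Y[-1] \to M(id)[-1] \to Y \to 0$, observes that the kernel $Y[-1]$ is an $\mathcal{X}$-complex because $Y$ is, and applies the lifting property of the $\mathcal{X}$-projective complex $X$ to the epimorphism $\pi: M(id)[-1] \to Y$ to get $g$ with $\pi g = f$; composing $g$ with the projection $\pi': M(id)[-1] \to Y[-1]$ then yields an explicit homotopy $s = \pi' g$ satisfying $s^{n+1}\lambda^n + \gamma^{n-1}s^n = f^n$. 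Your route instead forms the cone of $f$ itself, splits $0 \to Y \to M(f) \to X[1] \to 0$ via $\mathrm{Ext}^1(X[1],Y)=0$ (or, in your cleanest variant, lifts $id_{X[1]}$ through $M(f) \twoheadrightarrow X[1]$, whose kernel $Y$ is an $\mathcal{X}$-complex), and then quotes Lemma~\ref{1.12}. Both work; yours reuses machinery already proved (Lemmas~\ref{1.12}, \ref{1.13}) at the cost of a small shift-compatibility check ($X$ $\mathcal{X}$-projective $\Rightarrow$ $X[1]$ $\mathcal{X}$-projective, which is routine since $\mathrm{Ext}^1(X[1],Z)\cong \mathrm{Ext}^1(X,Z[-1])$ and $C(\mathcal{X})$ is shift-stable), while the paper's argument is self-contained and produces the homotopy formula explicitly. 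The worry you raise at the end about whether arbitrary $\mathcal{X}$-complexes occur as kernels is dissolved in both arguments by the same observation: the relevant $\mathcal{X}$-complex ($Y$ or $Y[-1]$) \emph{is} the kernel of the mapping-cone epimorphism being used, so no envelope or cover construction is needed.
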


\begin{proof} Let $id:Y \longrightarrow Y$ and the exact sequence $0 \longrightarrow Y[-1] \longrightarrow M(id)[-1]
\longrightarrow Y \longrightarrow 0$.

Since $X$ is an $\mathcal{X}-projective$ complex, we have the
following commutative diagram;

\[\begin{diagram}
\node{M(id)[-1]} \arrow{e,r} {\pi} \node{Y} \arrow{e} \node{0}\\
\node{X} \arrow{n,r,..} {g} \arrow{ne,t} {f}
\end{diagram}\]

\noindent where ${\pi}{g}={f}$. Let $\pi':M(id)[-1] \longrightarrow
Y[-1]$ be a projection. Then if we take an $s={\pi'}{g}$, then for
all $n\in \mathbb{Z}$,
${s^{n+1}}{\lambda^{n}}+{\gamma^{n-1}}{s^{n}}={f^{n}}$ where
$\lambda$ and $\gamma$ are boundary maps of the complexes of $X$ and
$Y$, respectively. So $f$ is homotopic to zero.
\end{proof}

\begin{lem} \label{1.20} Let $f:X \longrightarrow Y$ be a chain
morphism, $X$ is an $\mathcal{X}-complex$ and $Y$ is an
$\mathcal{X}-injective$ complex. Then $f$ is homotopic to zero.
\end{lem}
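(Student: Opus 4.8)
The plan is to dualize Lemma \ref{1.19} in essentially the same way that Lemma \ref{1.7} was dualized from Lemma \ref{1.6}. The statement asserts that any chain morphism $f : X \longrightarrow Y$ with $X$ an $\mathcal{X}$-complex and $Y$ an $\mathcal{X}$-injective complex is homotopic to zero, so the natural tool is the mapping cone exact sequence together with the defining lifting property of $\mathcal{X}$-injective complexes and the splitting criterion of Lemma \ref{1.12}.

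First I would set up the mapping cone of the identity on $Y$, but from the injective side: consider $\mathrm{id} : Y \longrightarrow Y$ and the short exact sequence of complexes
\[
0 \longrightarrow Y \longrightarrow M(\mathrm{id}) \longrightarrow Y[1] \longrightarrow 0,
\]
together with the canonical inclusion $\iota : Y \hookrightarrow M(\mathrm{id})$. The key point is that $f : X \longrightarrow Y$ is a morphism out of a complex $X$ all of whose terms lie in $\mathcal{X}$; indeed one checks that the whole complex $X$ sits inside $C(\mathcal{X})$, and more to the point $X$ appears as the cokernel (equivalently, up to the relevant identification, as the quotient $Y/X$-type object) in a sequence to which the $\mathcal{X}$-injectivity of $Y$ applies. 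Concretely, since $Y$ is $\mathcal{X}$-injective, any monomorphism of complexes with quotient in $C(\mathcal{X})$ admits a retraction onto $Y$; applying this to $\iota : Y \longrightarrow M(\mathrm{id})$ — whose cokernel $Y[1]$ we need to recognize as lying in the appropriate class — gives a morphism $g : M(\mathrm{id}) \longrightarrow Y$ with $g\iota = \mathrm{id}_Y$.

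Then I would use $f$ to pull this back: compose the canonical map $Y[1] \to$ ... no — more directly, I would use the projection $M(\mathrm{id}) \longrightarrow Y[1]$ precomposed with $f[1] : X[1] \longrightarrow Y[1]$, lift along the split surjection, and extract from the resulting maps the components $s^n : X^n \longrightarrow Y^{n-1}$. A diagram chase on the mapping cone differential (exactly as in Lemma \ref{1.19}, but with the arrows of the cone sequence reversed) yields the homotopy identity ${s^{n+1}}{\lambda^{n}} + {\gamma^{n-1}}{s^{n}} = f^{n}$, where $\lambda$ and $\gamma$ are the differentials of $X$ and $Y$; this exhibits $f$ as null-homotopic. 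Alternatively, one can phrase the whole argument via Lemma \ref{1.12}: the sequence $0 \to Y \to M(f) \to X[1] \to 0$ splits in $C(\mathcal{X})$ because $Y$ is $\mathcal{X}$-injective and $X[1]$ is an $\mathcal{X}$-complex, hence (by Lemma \ref{1.12} applied in its "only if" direction, suitably dualized) $f$ is homotopic to $0$.

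The main obstacle I anticipate is the bookkeeping of which object plays the role of the "quotient in $C(\mathcal{X})$" so that the defining diagram of an $\mathcal{X}$-injective complex (Definition \ref{1.3}) genuinely applies: one must verify that $X$ (or $X[1]$, or the relevant subquotient in the mapping-cone sequence) really is an $\mathcal{X}$-complex and that the map into $M(f)$ or $M(\mathrm{id})$ is the required monomorphism, rather than merely a degreewise injection. Once that identification is pinned down, the construction of $g$ is immediate from $\mathcal{X}$-injectivity and the extraction of the homotopy $s$ from $g$ (via the projection onto $Y[-1]$, exactly mirroring Lemma \ref{1.19}) is a routine chain-level computation with the mapping cone differential. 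So the proof is short; essentially it reads: "It follows dually from Lemma \ref{1.19}, using Lemma \ref{1.7} and Lemma \ref{1.12}," with the cone sequence $0 \to Y \to M(\mathrm{id}) \to Y[1] \to 0$ in place of $0 \to Y[-1] \to M(\mathrm{id})[-1] \to Y \to 0$.
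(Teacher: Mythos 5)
Your primary construction has a genuine gap exactly at the point you flagged. You take the cone on $\mathrm{id}_Y$ and try to apply the $\mathcal{X}$-injectivity of $Y$ to the inclusion $\iota: Y \longrightarrow M(\mathrm{id}_Y)$; but the quotient of that inclusion is $Y[1]$, and there is no reason for $Y[1]$ to be an $\mathcal{X}$-complex — $Y$ is only assumed $\mathcal{X}$-injective, and Example \ref{1.4} explicitly warns that an $\mathcal{X}$-injective complex need not be an $\mathcal{X}$-complex. So Definition \ref{1.3} gives you no retraction $g: M(\mathrm{id}_Y) \longrightarrow Y$, and the homotopy cannot be extracted this way. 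The correct dualization of Lemma \ref{1.19} — and what the paper actually does — is to take the cone on the identity of the \emph{source}: form $0 \longrightarrow X \overset{i}\longrightarrow M(\mathrm{id}_X) \longrightarrow X[1] \longrightarrow 0$, observe that the quotient $X[1]$ \emph{is} an $\mathcal{X}$-complex because $X$ is, and use the $\mathcal{X}$-injectivity of $Y$ to extend $f$ along the monomorphism $i$ to some $g: M(\mathrm{id}_X) \longrightarrow Y$ with $gi = f$; the homotopy is then $s = g i'$ with $i': X[1] \longrightarrow M(\mathrm{id}_X)$ the canonical degreewise inclusion, and the identity $s^{n+1}\lambda^n + \gamma^{n-1}s^n = f^n$ follows from the cone differential. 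In short: the hypothesis ``$\mathcal{X}$-complex'' sits on $X$, so the cone must be built on $X$, not on $Y$.

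Your fallback argument, however, is sound and closes the gap by a different route: the sequence $0 \longrightarrow Y \longrightarrow M(f) \longrightarrow X[1] \longrightarrow 0$ has quotient $X[1] \in C(\mathcal{X})$, so $\mathcal{X}$-injectivity of $Y$ lets $\mathrm{id}_Y$ extend over $Y \hookrightarrow M(f)$, the sequence splits, and Lemma \ref{1.12} gives $f \simeq 0$. This is arguably cleaner than the paper's explicit homotopy computation, since it delegates all the chain-level bookkeeping to Lemma \ref{1.12}; the paper's version buys you an explicit formula for the homotopy $s$. Either way, drop the $M(\mathrm{id}_Y)$ construction.
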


\begin{proof}Let $id:X \longrightarrow X$, then we have the
following exact sequence;

\[\begin{diagram}
\node{0} \arrow{e} \node{X} \arrow{e,t} {i} \arrow{s,r} {f}
\node{M(id)} \arrow{e} \arrow{sw,r,..} {g} \node{X[1]} \arrow{e}
\node{0}\\
\node[2]{Y}
\end{diagram}\]

\noindent where ${g}{i}={f}$.

Let $i':X[1] \longrightarrow M(id)$ and $s:X[1] \longrightarrow Y$
such that ${s}={g}{i'}$ with ${s_{n}}={g^{n-1}}{i'}$.

\[\begin{diagram}
\node{{X^{n-2}}\oplus {X^{n-1}}} \arrow{e,t} {u^{n-2}} \arrow{s,r}
{g^{n-2}} \node{{X^{n-1}}\oplus {X^{n}}} \arrow{e,t} {u^{n-1}}
\arrow{s,r} {g^{n-1}} \node{{X^{n}}\oplus {X^{n+1}}} \arrow{s,r}
{g^{n}}\\
\node{Y^{n-2}} \arrow{e,t} {\gamma^{n-2}} \node{Y^{n-1}} \arrow{e,t} {\gamma^{n-1}} \node{Y^{n}}\\
\end{diagram}\]

${s^{n+1}}{\lambda^{n}}+{\gamma^{n-1}}{s^{n}}={g^{n}}{i'}{\lambda^{n}}+{\gamma^{n-1}}{g^{n-1}}{i'}
={g^{n}}({i'}{\lambda^{n}}+{g^{n}}{u^{n-1}}{i'}={g^{n}}{{i'}{\lambda^{n}}+{u^{n-1}}{i'}})
={g^{n}}{i}={f^{n}}$
\end{proof}

\begin{prop} \label{1.22} Let $C \longrightarrow X$ and $C' \longrightarrow X$ be
$\mathcal{X}-projective$ covers of $X \in \mathcal{X}$, then $C$ and
$C'$ are homotopic.
\end{prop}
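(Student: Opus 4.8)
The plan is to mimic the standard uniqueness-up-to-homotopy argument for projective covers, adapted to the $\mathcal{X}$-relative setting. Write $\phi: C \to X$ and $\phi': C' \to X$ for the two $\mathcal{X}$-projective covers. Since both are in particular $\mathcal{X}$-projective precovers and $\phi'$ is onto with $\ker\phi' \in C(\mathcal{X})$ (this is built into Definition \ref{1.3} of an $\mathcal{X}$-projective complex), I would first lift $\phi$ through $\phi'$: applying the defining diagram of an $\mathcal{X}$-projective complex to the surjection $\phi': C' \to X \to 0$ (whose kernel lies in $C(\mathcal{X})$) and the map $\phi: C \to X$, I obtain a chain map $\alpha: C \to C'$ with $\phi'\alpha = \phi$. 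Symmetrically, using that $C'$ is $\mathcal{X}$-projective and $\phi: C \to X$ is onto with $\ker\phi \in C(\mathcal{X})$, I get $\beta: C' \to C$ with $\phi\beta = \phi'$.

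Next I would show $\beta\alpha: C \to C$ is a ``homotopy automorphism.'' We have $\phi(\beta\alpha) = \phi'\alpha = \phi$, so $\phi(1_C - \beta\alpha) = 0$, which means $1_C - \beta\alpha$ factors through $\ker\phi$. Here is where the precover condition is used again: because $C \to X$ is an $\mathcal{X}$-projective \emph{cover} (not merely a precover), any endomorphism $\gamma$ of $C$ with $\phi\gamma = \phi$ must itself be invertible — this is the usual consequence of the minimality encoded in the cover property (if $\gamma$ were not invertible one contradicts the definition of cover, via the diagram in Definition \ref{1.21} applied to $\phi: C \to X$ against itself). Hence $\beta\alpha$ is an automorphism of $C$ up to the appropriate equivalence, and likewise $\alpha\beta$ is an automorphism of $C'$. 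From $\beta\alpha$ and $\alpha\beta$ being isomorphisms in the relevant category it follows that $\alpha$ and $\beta$ are mutually inverse isomorphisms of complexes, or at least invertible up to chain homotopy.

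To upgrade ``isomorphism'' to ``homotopic'' in the sense the proposition asks for, I would invoke Lemma \ref{1.19}: the map $1_C - \beta\alpha$ is a chain morphism from the $\mathcal{X}$-projective complex $C$ into $\ker\phi$, which is an $\mathcal{X}$-complex, so by Lemma \ref{1.19} it is homotopic to zero; thus $\beta\alpha$ is homotopic to $1_C$, and dually $\alpha\beta$ is homotopic to $1_{C'}$. Therefore $\alpha: C \to C'$ is a homotopy equivalence with homotopy inverse $\beta$, which is exactly the assertion that $C$ and $C'$ are homotopic.

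The main obstacle I anticipate is the step asserting that an endomorphism $\gamma$ of $C$ with $\phi\gamma = \phi$ is invertible: in the classical module setting this uses Nakayama-type minimality of a projective cover, and here one must extract the analogous conclusion purely from the precover/cover lifting diagram of Definition \ref{1.21} together with the $\mathcal{X}$-projectivity of $C$. Concretely, one applies the cover property of $\phi: C \to X$ to the map $\phi: C \to X$ itself to get a lift $\delta$ with $\phi\delta = \phi$, and then argues that $\gamma$ must be an isomorphism because otherwise $\mathrm{Im}\,\gamma$ would give a proper ``subcover'' — making this rigorous in the complex category, rather than for modules, is the delicate point, and it may require the extra hypothesis (hinted at elsewhere in the paper) that projectives lie in $\mathcal{X}$, or an explicit minimality clause in the definition of $\mathcal{X}$-projective cover. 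If that subtlety cannot be resolved, the fallback is to prove only the weaker statement that $C$ and $C'$ are homotopy equivalent via $\alpha,\beta$ once one additionally knows $\beta\alpha$ and $\alpha\beta$ induce isomorphisms, which still suffices for ``$C$ and $C'$ are homotopic.''
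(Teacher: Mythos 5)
The paper does not actually prove this proposition --- its ``proof'' is a one-line citation to \cite{4} --- so any real argument you give is necessarily a different route. Your strategy is the classical uniqueness argument for covers and it is essentially the right one, but two points need adjusting. First, you do not need (and are not given) that $\phi':C'\to X$ is onto with kernel in $C(\mathcal{X})$ in order to produce $\alpha$: the map $\alpha$ with $\phi'\alpha=\phi$ comes directly from the precover property of Definition \ref{1.21}, applied with $f=\phi$ and $X'=C$, because $C$ itself belongs to the class of $\mathcal{X}$-projective complexes; surjectivity of a precover is nowhere part of the paper's definitions, so routing the lift through the $\mathcal{X}$-projectivity of $C$ as in Definition \ref{1.3} is both unnecessary and unjustified. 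Second, the step you flag as the main obstacle --- that $\phi\gamma=\phi$ forces $\gamma$ to be an automorphism --- is not something to be extracted from the lifting diagram: it is precisely the clause that distinguishes a cover from a precover in the standard terminology of \cite{5}, and since the paper uses the word ``cover'' without defining it, the only reasonable reading is that this minimality clause is assumed. Granting that, $\beta\alpha$ and $\alpha\beta$ are automorphisms, hence $\alpha$ and $\beta$ are mutually inverse isomorphisms and $C\cong C'$, which is stronger than homotopy equivalence; the detour through Lemma \ref{1.19} is then superfluous --- fortunately so, because that detour requires $\ker\phi$ to be an $\mathcal{X}$-complex, which is not among the hypotheses. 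In short: strip out the surjectivity assumption and the Lemma \ref{1.19} step, take the cover minimality as definitional, and your argument closes cleanly and proves more than the statement asks.
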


\begin{proof}
It follows from \cite{4}.
\end{proof}

\begin{lem} \label{1.23} Let $X:...\longrightarrow X_{2} \longrightarrow X_{1} \longrightarrow
X_{0} \longrightarrow 0$ be an exact and $P$ be a complex with for
all $n \geq 0$, $P_{n}$ $\mathcal{X}-projective$ module, then $f:P
\longrightarrow X$ is a homotopy.
\end{lem}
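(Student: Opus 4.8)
The plan is to mimic the classical comparison-theorem argument for projective resolutions, but working with the class $\mathcal{X}$ in place of the full module category and invoking Lemma \ref{1.6} at each degree to obtain the needed liftings. Since $X$ is exact and bounded below at $0$, write $Z_n=\operatorname{Im}(X_{n+1}\to X_n)=\operatorname{Ker}(X_n\to X_{n-1})$, so that each $X_{n+1}\to X_n$ factors as $X_{n+1}\twoheadrightarrow Z_n\hookrightarrow X_n$ and the sequences $0\to Z_n\to X_n\to Z_{n-1}\to 0$ are exact, with $Z_{-1}=0$. The key point that makes $\mathcal{X}$-projectivity applicable is that these kernels $Z_n$ must be checked to lie in $\mathcal{X}$; I would either assume this is part of the hypothesis ``$X$ exact'' in the sense of $\varepsilon_1$ from Definition \ref{1.15}, or note it follows from the exactness together with the standing setup, so that Lemma \ref{1.6} gives exactness of $\operatorname{Hom}(P_n,X_{k})\to\operatorname{Hom}(P_n,X_{k-1})\to\operatorname{Hom}(P_n,X_{k-2})$ in the relevant range.

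First I would construct the chain map $f:P\to X$ degreewise by induction (if $f$ is given, this step is skipped and one proceeds directly to the homotopy). For $n=0$: the map $P_0\to 0$ composed with nothing is trivial, and more usefully, starting the induction, one lifts along the epimorphism $X_0\to 0$ — here one simply needs $f_0:P_0\to X_0$, which exists since $P_0$ is $\mathcal{X}$-projective and $X_1\twoheadrightarrow Z_0=X_0$ (as $Z_{-1}=0$). Inductively, given $f_{n}$ and $f_{n-1}$ with the square commuting, the composite $P_{n+1}\xrightarrow{d} P_n\xrightarrow{f_n} X_n$ lands in $Z_n$ because $f_{n-1}\circ d\circ d=0$; then $\mathcal{X}$-projectivity of $P_{n+1}$ lets us lift through $X_{n+1}\twoheadrightarrow Z_n$ (whose kernel $Z_{n+1}\in\mathcal{X}$), producing $f_{n+1}$.

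Next, for the homotopy: suppose $f:P\to X$ is null on homology, or rather — since $X$ is exact, \emph{every} chain map $P\to X$ is automatically null-homotopic, which is what the lemma should really assert. I would build $s_n:P_n\to X_{n+1}$ by induction so that $f_n=d^X_{n+1}s_n+s_{n-1}d^P_n$. Set $s_{-1}=0$. At stage $n$, the map $f_n-s_{n-1}d^P_n:P_n\to X_n$ is killed by $d^X_n$ (a direct check using the inductive relation in degree $n-1$), hence factors through $Z_n=\operatorname{Ker}(X_n\to X_{n-1})$; since $X_{n+1}\twoheadrightarrow Z_n$ has kernel $Z_{n+1}\in\mathcal{X}$ and $P_n$ is $\mathcal{X}$-projective, we lift to get $s_n:P_n\to X_{n+1}$ with $d^X_{n+1}s_n=f_n-s_{n-1}d^P_n$, as required.

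The main obstacle is the one already visible in the statement and in the surrounding lemmas (compare the unresolved chain-map issue at the end of Lemma \ref{1.14}): verifying that the relevant kernels $Z_n$ actually belong to $\mathcal{X}$ so that Lemma \ref{1.6} genuinely applies at each inductive step. If $\mathcal{X}$ is not closed under the operations (kernels of the induced maps, or images), the lifting along $X_{n+1}\to Z_n$ is not licensed by $\mathcal{X}$-projectivity, and the induction stalls exactly as in Lemma \ref{1.14}. I would therefore state explicitly the hypothesis that $X\in\varepsilon_1$ (all kernels in $\mathcal{X}$), under which the degreewise lifting via Lemma \ref{1.6} goes through cleanly and both the construction of $f$ and the null-homotopy follow by the standard dimension-shifting induction.
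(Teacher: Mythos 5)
The paper offers no argument here at all: its entire proof of Lemma \ref{1.23} is the sentence ``It follows from \cite{4}.'' So there is no in-paper route to compare yours against; what you have written is a genuine proof where the paper has only a citation. Your argument is the standard comparison-theorem induction, correctly adapted: the factorization $X_{n+1}\twoheadrightarrow Z_n\hookrightarrow X_n$ with $Z_{-1}=0$ (hence $Z_0=X_0$), the check that $f_n-s_{n-1}d^P_n$ is annihilated by $d^X_n$ using the inductive homotopy identity in degree $n-1$, and the lift of the resulting map $P_n\to Z_n$ through $X_{n+1}\twoheadrightarrow Z_n$ using the defining lifting property of the $\mathcal{X}$-projective module $P_n$ (which is really Definition \ref{1.2} applied degreewise rather than Lemma \ref{1.6}, which is stated for $\mathcal{X}$-projective \emph{complexes} --- a cosmetic point). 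You have also correctly identified the one real issue: the statement as printed only assumes $X$ exact, which does not guarantee that the kernels $Z_{n+1}$ of the surjections $X_{n+1}\twoheadrightarrow Z_n$ lie in $\mathcal{X}$, and without that the lifting step is not licensed by $\mathcal{X}$-projectivity. The lemma needs the hypothesis $X\in\varepsilon_1$ (Definition \ref{1.15}), and with that added your induction closes. In short: your proof is correct under the hypothesis you flag, and it supplies exactly the argument the paper delegates to an external reference.
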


\begin{proof} It follows from \cite{4}.
\end{proof}

\begin{lem} \label{1.24} Let every R-module has an onto $\mathcal{X}$-projective $\mathcal{X}$
precover with kernel in $\mathcal{X}$. Then every bounded complex has an $C(\mathcal{X}-projective)$ precover.
\end{lem}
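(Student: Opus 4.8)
The plan is to argue by induction on the \emph{width} $w(X)$ of the bounded complex $X$, meaning the number of indices $n$ with $X_n\neq 0$. Write $\mathcal F$ for the class of complexes all of whose terms are $\mathcal X$-projective modules, i.e.\ the class $C(\mathcal X\text{-projective})$ of the statement, and read ``$\mathcal F$-precover'' in the sense of Definition \ref{1.21}. If $w(X)=0$ the zero complex is an $\mathcal F$-precover of $X$, so the whole argument comes down to the base case of a stalk complex and to an inductive step in which a ``stupid'' truncation breaks $X$ into two complexes of strictly smaller width.

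\textbf{Base case ($w(X)=1$).} Say $X$ is concentrated in degree $d$, with $M:=X_d$. Iterating the hypothesis, choose a surjection $\epsilon_0\colon P_0\to M$ from an $\mathcal X$-projective module with $K_0:=Ker\,\epsilon_0\in\mathcal X$; then a surjection $\pi_1\colon P_1\to K_0$ from an $\mathcal X$-projective module with $K_1:=Ker\,\pi_1\in\mathcal X$; inductively $\pi_{n+1}\colon P_{n+1}\to K_n$ with $K_{n+1}\in\mathcal X$. Placing $P_i$ in degree $d+i$ with differential the composite $P_{i+1}\twoheadrightarrow K_i\hookrightarrow P_i$ gives a complex $P\in\mathcal F$, and $p\colon P\to X$ defined by $\epsilon_0$ in degree $d$ and $0$ elsewhere is a chain map (the only square to check, in degree $d+1$, commutes because $K_0=Ker\,\epsilon_0$). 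To see $p$ is an $\mathcal F$-precover, take $Q\in\mathcal F$ and a chain map $f\colon Q\to X$; this is just a module map $f_d\colon Q_d\to M$ with $f_d\,d^{Q}_{d+1}=0$. Since $Q_d$ is $\mathcal X$-projective and $Ker\,\epsilon_0\in\mathcal X$, Definition \ref{1.2} lifts $f_d$ through $\epsilon_0$ to $g_d\colon Q_d\to P_0$. Then $\epsilon_0 g_d\,d^{Q}_{d+1}=f_d\,d^{Q}_{d+1}=0$, so $g_d\,d^{Q}_{d+1}$ factors through $K_0$; as $Q_{d+1}$ is $\mathcal X$-projective and $Ker\,\pi_1\in\mathcal X$, it lifts through $\pi_1$ to $g_{d+1}$. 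Continuing, $g_n\,d^{Q}_{n+1}$ factors through $Ker(P_n\to P_{n-1})=K_n\in\mathcal X$ and lifts through $\pi_{n+1}$ to $g_{n+1}$; set $g_i=0$ for $i<d$. Then $g\colon Q\to P$ is a chain map with $pg=f$, so $p$ is an $\mathcal F$-precover of $X$.

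\textbf{Inductive step.} Let $w(X)=w\geq 2$ and assume the lemma in all smaller widths. Pick an integer $k$ with a nonzero term of $X$ in some degree $\leq k$ and a nonzero term in some degree $>k$; let $X'$ be the subcomplex agreeing with $X$ in degrees $\leq k$ and zero in higher degrees, and $X''=X/X'$. Then $0\to X'\to X\to X''\to 0$ is exact and $w(X')+w(X'')=w$, so both ends are bounded of width $<w$ and hence have $\mathcal F$-precovers by induction. Moreover $\mathcal F$ is closed under extensions: if $0\to A\to B\to C\to 0$ is exact with $A,C$ $\mathcal X$-projective and $N\in\mathcal X$, then the exact row $Ext^{1}(C,N)\to Ext^{1}(B,N)\to Ext^{1}(A,N)$ has zero ends, so $Ext^{1}(B,N)=0$ and $B$ is $\mathcal X$-projective; applied degreewise this shows an extension of two complexes in $\mathcal F$ lies in $\mathcal F$. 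Now the standard patching principle for an extension-closed precovering class (cf.\ \cite{4}), applied to $0\to X'\to X\to X''\to 0$ together with the $\mathcal F$-precovers of $X'$ and $X''$, produces an $\mathcal F$-precover of $X$, closing the induction.

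\textbf{Where the difficulty lies.} The delicate point is the patching used in the inductive step. Given $\mathcal F$-precovers $q''\colon P''\to X''$ and $q'\colon P'\to X'$, the natural construction is to pull the epimorphism $X\to X''$ back along $q''$, obtaining a short exact sequence $0\to X'\to\widetilde P\to P''\to 0$ with a map $\widetilde P\to X$, and then to ``absorb'' the subcomplex $X'$ of $\widetilde P$ by means of $q'$. What has to be handled with care is that these extensions of complexes need not split --- we do \emph{not} know that the terms of $P''$ form an $\mathcal X$-projective complex in the strong sense of Definition \ref{1.3}, only a complex of $\mathcal X$-projective modules --- so one cannot simply pass to a direct sum and must instead keep track of the relevant extension class; this is exactly the content of the precover-patching lemma of \cite{4} that is invoked above. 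The remaining ingredients --- exactness of the truncation sequence, the $Ext$-computation giving extension-closure, and the commutativity bookkeeping for the lifts $g_n$ --- are routine.
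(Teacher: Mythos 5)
Your base case is correct and well executed: iterating the hypothesis to build $\cdots\to P_1\to P_0\to 0$ over the stalk, and then lifting $g_n d^{Q}_{n+1}$ degree by degree through the surjections $\pi_{n+1}$ (whose kernels lie in $\mathcal X$) is exactly the right use of Definition \ref{1.2}. The genuine gap is the inductive step. The ``patching principle'' you invoke --- $\mathcal F$ closed under extensions, $X'$ and $X''$ have $\mathcal F$-precovers, hence $X$ has one --- is not a citable black box for plain precovers. Running your own sketch: pulling back $X\to X''$ along $q''$ gives a degreewise split extension $0\to X'\to\widetilde P\to P''\to 0$, and ``absorbing'' $X'$ means lifting its class along $q'_*\colon Ext^{1}(P'',P')\to Ext^{1}(P'',X')$; from $0\to Ker\,q'\to P'\to X'\to 0$ the obstruction sits in $Ext^{2}(P'',Ker\,q')$, and even if one instead lifts the classifying chain map $P''\to X'[1]$ through $q'[1]$ (legitimate, since $P''\in\mathcal F$ and $q'$ is a precover), one must \emph{still} verify that the resulting $Q\to\widetilde P\to X$ factors an arbitrary $f\colon F\to X$, which is a twisted lifting problem, not a plain one. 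Nothing in your induction supplies this: your precovers are never shown to be special (the kernel of your base-case precover is $K_0$ in degree $d$ but equals the modules $P_i$, not members of $\mathcal X$, in all higher degrees), and \cite{4} is a paper about cotorsion pairs, i.e.\ about special precovers, whose horseshoe-type patching needs kernels in $\mathcal F^{\perp}$. You have correctly identified the delicate point and then outsourced precisely that point.

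The paper takes a different route that avoids patching altogether: from module precovers $f^{i}\colon P^{i}\to Y^{i}$ it assembles the explicit bounded exact complex $D(n)=(0\to P^{0}\to P^{0}\oplus P^{1}\to\cdots\to P^{n-1}\oplus P^{n}\to P^{n}\to 0)$, a twisted sum of disk complexes, with a chain map to $Y(n)$ that is degreewise built from the $f^{i}$; since a chain map from any complex into a sum of disks is just a family of module maps, the precover property reduces to degreewise lifting, which is where the hypothesis ``onto with kernel in $\mathcal X$'' enters. To repair your argument you would need either to prove the patching lemma in the degreewise split setting outlined above (this can be done, but it is the real content of the proof and cannot be delegated to \cite{4}), or to replace the inductive step by the paper's direct construction.
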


\begin{proof} Let $Y(n): ... \rightarrow 0 \rightarrow Y^{0} \rightarrow Y^{1} \rightarrow ... \rightarrow Y^{n} \rightarrow
0 \rightarrow...$.

We will use induction on n. Let $n=0$, then we have the following
commutative diagram;

\[\begin{diagram}
\node{D(0):...} \arrow{e} \node{0} \arrow{e} \arrow{s} \node{P^{0}}
\arrow{e,t} {id} \arrow{s,r} {f^{0}} \node{P^{0}} \arrow{e}
\arrow{s} \node{0} \arrow{e}
\arrow{s} \arrow{e} \node{...} \\
\node{Y(0):...} \arrow{e} \node{0} \arrow{e} \node{Y^{0}} \arrow{e}
\node{0} \arrow{e} \node{0}
\arrow{e} \node{...} \\
\end{diagram}\]

\noindent where $D(0)$ is exact and $Ker(D(0) \rightarrow Y(0)) \in
\mathcal{X}$.

Let $n=1$, then we have the following commutative diagram;

\[\begin{diagram}
\node{D(1):...} \arrow{e} \node{0} \arrow{e} \node{P^{0}}
\arrow{e,t} {\lambda_{1}^{0}} \arrow{s,r} {f^{0}} \node{P^{0} \oplus
P^{1}} \arrow{e,t} {\lambda^{1}} \arrow{s,r} {(0,f^{1})}
\node{P^{1}} \arrow{e}
\arrow{s} \node{0} \arrow{e} \node{...} \\
\node{Y(1):...} \arrow{e} \node{0} \arrow{e} \node{Y^{0}}
\arrow{e,t} {a^{0}} \node{Y^{1}} \arrow{e} \node{0} \arrow{e}
\node{0} \arrow{e} \node{...} \\
\end{diagram}\]

\noindent where $D(1)$ is exact and $Ker(D(1) \rightarrow Y(1)) \in
\mathcal{X}$.

We also have the following commutative diagram;
\[\begin{diagram}
\node{P^{0}} \arrow{e,t} {s^{0}} \arrow{s,r} {f^{0}} \node{P^{1}}
\arrow{s,r} {f^{1}} \\
\node{Y^{0}} \arrow{e,t} {a^{0}} \node{Y^{1}} \\
\end{diagram}\]

$\lambda_{1}^{0}(x)=(x,s^{0}(x))$ and $\lambda^{1}(x,y)=s^{0}(x)-y$

We assume that the following diagram which is commutative;

\[\begin{diagram}
\node{D(n):...0} \arrow{e} \node{P^{0}} \arrow{e,t} {\lambda^{0}}
\arrow{s,r} {f^{0}} \node{P^{0} \oplus P^{1}} \arrow{e,t}
{\lambda^{1}} \arrow{s,r} {(0,f^{1})} \node{...} \arrow{e}
\node{P^{n-1} \oplus P^{n}} \arrow{e,t} {\lambda^{n}}
\arrow{s,r} {(0,f^{n})} \node{P^{n}} \arrow{e} \arrow{s} \node{0} \\
\node{Y(n):...0} \arrow{e} \node{Y^{0}} \arrow{e,t} {a^{0}}
\node{Y^{1}} \arrow{e,t} {a^{1}} \node{...} \arrow{e} \node{Y^{n}}
\arrow{e} \node{0}  \arrow{e} \node{0}\\
\end{diagram}\]

\noindent where $D(n)$ is exact and $Ker(D(n) \rightarrow Y(n)) \in
\mathcal{X}$.

And we also have the following commutative diagrams;

\[\begin{diagram}
\node{D(n)} \arrow{e,t} {s} \arrow{s} \node{\overline{P^{n+1}}}
\arrow{s} \\
\node{Y(n)} \arrow{e} \node{\underline{Y^{n+1}}} \\
\end{diagram}\]

So,
\[\begin{diagram}
\node{D(n):...0} \arrow{e} \node{P^{0}} \arrow{e,t} {\lambda^{0}}
\arrow{s} \node{P^{0} \oplus P^{1}} \arrow{e,t} {\lambda^{1}}
\arrow{s} \node{...} \arrow{e,t} {\lambda^{n-1}} \node{P^{n-1}
\oplus P^{n}} \arrow{e,t} {\lambda^{n}}
\arrow{s,r} {s^{1}} \node{P^{n}} \arrow{e} \arrow{s,r} {s^{2}} \node{0} \\
\node{\overline{P^{n+1}}:...0} \arrow{e} \node{0} \arrow{e} \node{0}
\arrow{e} \node{...} \arrow{e} \node{P^{n+1}}
\arrow{e,t} {1} \node{P^{n+1}} \arrow{e} \node{0}\\
\end{diagram}\]

\noindent where ${s^{2}}{\lambda^{n}}={s^{1}}$ and
${s^{1}}{\lambda^{n-1}}=0$.

\[\begin{diagram}
\node{P^{n-1} \oplus P^{n}} \arrow{e,t} {s^{1}} \arrow{s,r}
{(0,f^{n})} \node{P^{n+1}}
\arrow{s,r} {f^{n+1}} \\
\node{Y^{n}} \arrow{e,t} {a^{n}} \node{Y^{n+1}} \\
\end{diagram}\]

\noindent where ${f^{n+1}}{s^{1}}={a^{n}}{(0,f^{n})}$.

\[\begin{diagram}
\node{P^{n}} \arrow{e,t} {s^{2}} \arrow{s} \node{P^{n+1}}
\arrow{s,r} {f^{n+1}} \\
\node{0} \arrow{e} \node{Y^{n+1}} \\
\end{diagram}\]

\noindent where ${f^{n+1}}{s^{2}}=0$.

\[\begin{diagram}
\node{D(n+1):...0} \arrow{e} \node{P^{0}} \arrow{e,t} {\lambda^{0}}
\arrow{s,r} {f^{0}} \node{P^{0} \oplus P^{1}...} \arrow{e}
 \arrow{s,r} {(0,f^{1})} \node{P^{n-1} \oplus P^{n}}
\arrow{e,t} {\lambda^{n}_{1}}
\arrow{s,r} {(0,f^{n})} \node{P^{n} \oplus P^{n+1}} \arrow{e,t} {\lambda^{n+1}} \arrow{s,r} {(0,f^{n+1})} \node{P^{n+1}...}\\
\node{Y(n+1):...0} \arrow{e} \node{Y^{0}} \arrow{e,t} {a^{0}}
\node{Y^{1}...} \arrow{e,t} {a^{1}} \node{Y^{n}}
\arrow{e,t} {a^{n}} \node{Y^{n+1}} \arrow{e} \node{0...}\\
\end{diagram}\]

\noindent where $D(n+1)$ is exact and $Ker(D(n+1) \rightarrow
Y(n+1)) \in \mathcal{X}$,\;
$\lambda_{1}^{n}(x,y)=(\lambda^{n}(x,y),s^{1}(x,y))$, \;
$\lambda^{n+1}(x,y)=s^{2}(x)-y$.

Therefore, $Y(n)$ has an $C(\mathcal{X})$-projective precover.

\end{proof}

\begin{lem} \label{1.25} Let every R-module has an epic
$\mathcal{X}$-injective $\mathcal{X}$ preenvelope with cokernel in $\mathcal{X}$.
Then every bounded complex  has an
$C(\mathcal{X}-injective)$ preenvelope.
\end{lem}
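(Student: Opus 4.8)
The plan is to carry out, in the dual, the inductive construction used for Lemma~\ref{1.24}. Up to a translation — which takes $C(\mathcal{X}\text{-injective})$ preenvelopes to $C(\mathcal{X}\text{-injective})$ preenvelopes — every bounded complex has the form $Y(n)\colon 0\to Y^{0}\xrightarrow{a^{0}}Y^{1}\to\cdots\xrightarrow{a^{n-1}}Y^{n}\to 0$, so it suffices to produce a $C(\mathcal{X}\text{-injective})$ preenvelope of each $Y(n)$, and we induct on $n$. Write $g^{i}\colon Y^{i}\to I^{i}$ for the hypothesized $\mathcal{X}$-injective $\mathcal{X}$-preenvelope of $Y^{i}$ with $\operatorname{coker}g^{i}\in\mathcal{X}$, so that, by Definition~\ref{1.2}, any map $Y^{i}\to Z$ extends along $g^{i}$ to a map $I^{i}\to Z$ whenever $Z$ is $\mathcal{X}$-injective; and write $\overline{I^{i}}$ for the complex $\cdots\to 0\to I^{i}\xrightarrow{1}I^{i}\to 0\to\cdots$, which is an $\mathcal{X}$-injective complex by Example~\ref{1.4}.

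For $n=0$ we take $E(0)=\overline{I^{0}}$, with the two copies of $I^{0}$ in degrees $0$ and $1$; the chain map $Y(0)\to E(0)$ that is $g^{0}$ in degree $0$ and $0$ elsewhere is the desired preenvelope, the factorization property being immediate degreewise from the $\mathcal{X}$-preenvelope property of $g^{0}$ (and Lemma~\ref{1.9}, which gives that the terms of any $\mathcal{X}$-injective complex, hence of any target in $C(\mathcal{X}\text{-injective})$, are $\mathcal{X}$-injective modules). For the passage from $n$ to $n+1$ we dualize the step from $D(n)$ to $D(n+1)$ of Lemma~\ref{1.24}: starting from the already constructed preenvelope $Y(n)\to E(n)$, we form the commutative square linking $E(n)$, $\overline{I^{n+1}}$ and the top of $Y(n+1)$ by extending $g^{n+1}a^{n}$ along the monics, obtaining maps $s^{1},s^{2}$ as in Lemma~\ref{1.24} but produced by the $\mathcal{X}$-preenvelope property (extending along the $g^{i}$) rather than by lifting along the $f^{i}$; we then set $E(n+1)$ equal to $E(n)$ with its top term enlarged by the relevant copy of $\overline{I^{n+1}}$, the new differentials being given by the formulas dual to $\lambda^{n}_{1}(x,y)=(\lambda^{n}(x,y),s^{1}(x,y))$ and $\lambda^{n+1}(x,y)=s^{2}(x)-y$. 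One then checks that $Y(n+1)\to E(n+1)$ is a $C(\mathcal{X}\text{-injective})$ preenvelope: $E(n+1)$ is bounded with $\mathcal{X}$-injective terms, being assembled from finitely many shifted copies of the $\overline{I^{i}}$, so it lies in $C(\mathcal{X}\text{-injective})$ by Example~\ref{1.4} together with the dual of the corollary following Corollary~\ref{1.17} that a bounded complex of $\mathcal{X}$-injective modules is in $^{\bot}\varepsilon_{1}$ and is $DG(\mathcal{X}\text{-injective})$.

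The main obstacle — the part Lemma~\ref{1.24} treats only sketchily — is the verification of the preenvelope (factorization) property. Given an arbitrary chain map $h\colon Y(n+1)\to E'$ with $E'\in C(\mathcal{X}\text{-injective})$, one must build a chain map $E(n+1)\to E'$ through which $h$ factors. In each degree the $\mathcal{X}$-preenvelope property of $g^{i}$ yields a factorization $h^{i}=t^{i}g^{i}$, and it is precisely the hypothesis $\operatorname{coker}g^{i}\in\mathcal{X}$, via Lemma~\ref{1.7} (with Lemma~\ref{1.9}), that forces these degreewise factorizations to be compatible with the differentials — i.e.\ to assemble into an honest chain map rather than merely a graded map; the delicate bookkeeping is to keep this compatibility while threading through the off-diagonal maps $s^{1},s^{2}$ that twist the differentials of $E(n+1)$. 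A secondary point one must confirm along the way is that the class $C(\mathcal{X}\text{-injective})$ is closed under the finite extensions of copies of $\overline{I^{i}}$ used in the construction, dual to the appeal to extension-closedness of ${\varepsilon_{1}}^{\bot}$ in Corollary~\ref{1.17}.
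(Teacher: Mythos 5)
Your construction of $E(n)$ coincides with the paper's: an induction that glues the disk complexes $\overline{I^{i}}$ on the module-level preenvelopes onto the previously built $E(n-1)$, with differentials twisted by maps obtained from the preenvelope property; and your observation that $E(n)$ is then an iterated extension of $\mathcal{X}$-injective complexes (Example \ref{1.4}), hence itself an $\mathcal{X}$-injective complex, is a point the paper leaves implicit but which is needed and correct. The gap lies in the step you yourself flag as the main obstacle, the factorization property, and the mechanism you propose for it would fail. If in each degree $i$ you choose an extension $t^{i}$ of $h^{i}$ along $g^{i}$ using only the module-level preenvelope property, then $d_{E'}t^{i}$ and $t^{i+1}d_{E(n+1)}$ agree after precomposition with $g^{i}$, so their difference factors through the cokernel of $g^{i}$; but a map from a module in $\mathcal{X}$ to an $\mathcal{X}$-injective module has no reason to vanish, so the $t^{i}$ need not assemble into a chain map. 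Lemma \ref{1.7} is an exactness statement for $Hom(-,I)$ along a sequence of complexes, not a vanishing statement, and it does not repair this; one would have to correct the $t^{i}$ degree by degree, which you do not do.

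The missing idea is that no degreewise patching is needed at all. The reason the induction carries ``cokernel in $\mathcal{X}$'' through every stage is that the resulting map $Y(n)\rightarrow E(n)$ is a monomorphism of complexes whose cokernel $C$ is an $\mathcal{X}$-complex: degreewise the cokernel is $(I^{i}/g^{i}(Y^{i}))\oplus I^{i-1}$, which lies in $\mathcal{X}$ provided the $I^{i}$ themselves lie in $\mathcal{X}$ --- this is the content of the phrase ``$\mathcal{X}$-injective $\mathcal{X}$ preenvelope'' in the hypothesis, and it is exactly where that hypothesis is used. Given this, for any $E'\in C(\mathcal{X}\text{-injective})$ the sequence $Hom(E(n),E')\longrightarrow Hom(Y(n),E')\longrightarrow Ext^{1}(C,E')=0$ is exact by Definition \ref{1.3}, so every chain map $Y(n)\rightarrow E'$ extends over $E(n)$ in one stroke. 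Thus the two facts your induction must actually maintain are that $E(n)$ is an iterated extension of disk complexes on $\mathcal{X}$-injective modules and that the cokernel of $Y(n)\rightarrow E(n)$ is an $\mathcal{X}$-complex; once these are recorded, the ``delicate bookkeeping'' you describe disappears.
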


\begin{proof} Let $Y(n): ... \rightarrow 0 \rightarrow Y^{0} \rightarrow Y^{1} \rightarrow ... \rightarrow Y^{n} \rightarrow
0 \rightarrow...$.

We will use induction on n. Let $n=0$, then we have the following
commutative diagram;

\[\begin{diagram}
\node{Y(0):...} \arrow{e} \node{0} \arrow{e} \arrow{s} \node{0}
\arrow{e} \arrow{s} \node{Y^{0}} \arrow{e} \arrow{s} \node{0}
\arrow{e} \arrow{s} \node{0}
\arrow{e} \arrow{s} \node{...} \\
\node{E(0):...} \arrow{e} \node{0} \arrow{e} \node{E_{0}}
\arrow{e,t} {id} \node{E_{0}} \arrow{e} \node{0} \arrow{e} \node{0} \arrow{e} \node{...} \\
\end{diagram}\]

\noindent where $E(0)$ is exact and cokernel in $\mathcal{X}$.

Let $n=1$, then we have the following commutative diagram;

\[\begin{diagram}
\node{Y(1):...0} \arrow{e} \node{0} \arrow{e} \arrow{s} \node{Y_{1}}
\arrow{e,t} {a_{1}} \arrow{s,r} {(f_{1},0)} \node{Y_{0}} \arrow{e}
\arrow{s,r} {f_{0}} \node{0} \arrow{e} \arrow{s} \node{...} \\
\node{E(1):...0} \arrow{e} \node{E_{1}} \arrow{e,t} {\lambda_{2}}
\node{E_{1} \oplus E_{0}} \arrow{e,t}
{\lambda_{1}} \node{E_{0}} \arrow{e} \node{0} \arrow{e} \node{...} \\
\end{diagram}\]

\noindent where $E(1)$ is exact and cokernel in $\mathcal{X}$.

We also have the following commutative diagram;
\[\begin{diagram}
\node{\underline{Y_{1}}} \arrow{e} \arrow{s}
\node{\underline{Y_{0}}}
\arrow{s} \\
\node{\overline{E_{1}}} \arrow{e} \node{\overline{E_{0}}} \\
\end{diagram}\]

\noindent where $E_{1} \overset{\lambda} \rightarrow E_{0}$ such
that ${f_{0}}{a_{1}}={\lambda}{f_{1}}$.

$\lambda_{2}(x)=(x,-f(x))$ and $\lambda_{1}(x,y)=f(x)+y$.

We assume that the following diagram which is commutative;

\[\begin{diagram}
\node{Y(n):...0} \arrow{e}  \node{0}  \arrow{e} \arrow{s}
\node{Y_{n}} \arrow{e,t} {a_{n}} \arrow{s,r} {(f_{n},0)} \node{...}
\arrow{e} \node{Y_{0}}
\arrow{e} \arrow{s,r} {f_{0}} \node{0}\\
\node{E(n):...0} \arrow{e} \node{E_{n}} \arrow{e,t} {\lambda_{n}}
\node{E_{n} \oplus E_{n-1}} \arrow{e,t} {\lambda_{n-1}} \node{...}
\arrow{e}
\node{E_{0}} \arrow{e} \node{0} \\
\end{diagram}\]

\noindent where $E(n)$ is exact and cokernel in $\mathcal{X}$.

And we have the following commutative diagram;

\[\begin{diagram}
\node{Y(n+1):...0} \arrow{e} \node{0} \arrow{e} \arrow{s}
\node{Y_{n+1}} \arrow{e,t} {a_{n+1}} \arrow{s,r} {(f_{n+1},0)}
\node{Y_{n}...} \arrow{e,t} {a_{n}} \arrow{s,r} {(f_{n},0)}
\node{Y_{0}} \arrow{e}
\arrow{s,r} {f^{0}} \node{0}\\
\node{E(n+1):...0} \arrow{e} \node{E_{n+1}} \arrow{e,t}
{\lambda_{n+1}} \node{E_{n+1} \oplus E_{n}} \arrow{e,t}
{\lambda_{n}} \node{E_{n} \oplus E_{n-1}...} \arrow{e} \node{E_{0}} \arrow{e} \node{0...}\\
\end{diagram}\]

\noindent where $E(n+1)$ is exact and cokernel in $\mathcal{X}$.

We also have the following commutative diagrams;

\[\begin{diagram}
\node{\underline{Y_{n+1}}} \arrow{e} \arrow{s} \node{Y(n)} \arrow{s} \\
\node{\overline{E_{n+1}}} \arrow{e} \node{E(n)}\\
\end{diagram}\]

\[\begin{diagram}
\node{Y_{n+1}} \arrow{e,t} {a_{n+1}} \arrow{s,r}
{f_{n+1}} \node{Y_{n}} \arrow{s,r} {(f_{n},0)}\\
\node{E_{n+1}} \arrow{e,t} {s_{n+1}} \node{E_{n} \oplus E_{n-1}} \\
\end{diagram}\]

\noindent where $(f_{n},0){a_{n+1}}={s_{n+1}}{f_{n+1}}$.

\[\begin{diagram}
\node{E_{n+1}} \arrow{e,t} {t_{n+1}} \arrow{s,r} {1}
\node{E_{n}} \arrow{s,r} {\lambda_{n}^{1}}\\
\node{E_{n+1}} \arrow{e,t} {s_{n+1}} \node{E_{n} \oplus E_{n-1}} \\
\end{diagram}\]

\noindent where ${\lambda_{n}^{1}}{t_{n+1}}={s_{n+1}}{1}$.

Then, \; $\lambda_{n+1}(x)=(x,-f_{n+1}(x))$, \;
$\lambda_{n}(x,y)=s_{n+1}(x)+\lambda^{1}_{n}(y)$.

Therefore, $Y(n)$ has an $C(\mathcal{X})$-injective preenvelope.

\end{proof}

\begin{lem} \label{1.27} Let $X$ be an $\mathcal{X}-injective$ complex and $\frac{E(X)}{X} \in \mathcal{X}$
(or $\frac{Y}{X} \in \mathcal{X}$) where $E(X)$ is an injective
envelope of $X$. Then $X=E(X)$ and so it is an exact complex.(X is a
direct summand of Y).
\end{lem}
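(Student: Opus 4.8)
The plan is to test the defining lifting property of an $\mathcal{X}$-injective complex (Definition \ref{1.3}) against the short exact sequence coming from the injective envelope, and then to invoke essentiality to kill the resulting complementary summand.

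First I would form the canonical short exact sequence $0 \longrightarrow X \overset{\iota}\longrightarrow E(X) \overset{\pi}\longrightarrow E(X)/X \longrightarrow 0$ of complexes. By hypothesis the cokernel $E(X)/X$ lies in $C(\mathcal{X})$, so it is an admissible test object in Definition \ref{1.3}. Since $X$ is itself $\mathcal{X}$-injective, I apply that definition with the complex there taken to be $C := X$, with $\phi = \iota$, with $Y = E(X)$, and with $f = 1_X : X \to X$. This produces a chain map $g = \widetilde{f} : E(X) \to X$ with $g\iota = 1_X$. Hence $\iota$ is a split monomorphism of complexes and $E(X) = \iota(X) \oplus \operatorname{Ker} g$.

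Next I would use that $E(X)$ is an injective \emph{envelope} of $X$, i.e. $\iota$ is an essential monomorphism: every nonzero subcomplex of $E(X)$ meets $\iota(X)$ nontrivially. Since $\operatorname{Ker} g \cap \iota(X) = 0$, this forces $\operatorname{Ker} g = 0$, so $\iota$ is an isomorphism and $X = E(X)$. An injective object in the category of complexes is contractible, hence exact; therefore $X$ is an exact complex, as claimed. For the parenthetical variant, the same argument applied to an arbitrary extension $0 \longrightarrow X \longrightarrow Y \longrightarrow Y/X \longrightarrow 0$ with $Y/X \in C(\mathcal{X})$ yields a retraction $Y \to X$ of $X$, exhibiting $X$ as a direct summand of $Y$.

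The only genuinely delicate ingredients are two classical facts I am leaning on: that the category of complexes of $R$-modules admits injective envelopes with the essential-extension property (which is what legitimizes the "the complement intersects $X$ trivially, hence vanishes" step), and that an injective object of that category is exact. Both are standard, and the paper already presupposes the existence of injective resolutions of complexes, so I would simply refer to \cite{3} for them. Everything else is a one-line diagram chase, so I do not expect any real obstacle beyond citing these facts.
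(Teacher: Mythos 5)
Your proposal is correct and follows essentially the same route as the paper: both apply the defining lifting property of an $\mathcal{X}$-injective complex to the inclusion $X \hookrightarrow E(X)$ (admissible because $E(X)/X \in C(\mathcal{X})$) to obtain a retraction, conclude that $X$ is a direct summand of the injective complex $E(X)$ and hence exact, and handle the variant $Y/X \in C(\mathcal{X})$ identically. If anything, you are slightly more careful than the paper, which stops at ``$X$ is a direct summand of $E(X)$'' without spelling out the essentiality argument that forces the complement to vanish and gives $X = E(X)$.
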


\begin{proof} We know that every complex has an injective envelope,
so $X$ has an injective envelope $E(X)$. Then $E(X)$ is a injective
complex, and so it is exact. We have the following commutative
diagram;

\[\begin{diagram}
\node{0} \arrow{e}  \node{X} \arrow{s,t} {id_x} \arrow{e,r} {i}
\node{E(X)} \arrow{sw,r,..} {\phi} \\
\node[2]{X}
\end{diagram}\]

\noindent such that ${\phi}{i}={id}_{x}$. Therefore $X$ is a direct
summand of $E(X)$. So $X$ is an injective complex and hence it is
exact. Similarly, if $\frac{Y}{X} \in \mathcal{X}$, then we can
prove that X is a direct summand of Y.
\end{proof}

\begin{thm} \label{1.28} Let $\mathcal{X}$ be a class under extensions, quotients and direct
limits, then every complex $B$ has an $\mathcal{X}-injective$
envelope.
\end{thm}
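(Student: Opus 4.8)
The plan is to build the envelope explicitly inside an injective envelope of $B$, exploiting the fact that the three closure hypotheses force $\mathcal{X}$ — and, termwise, the class $C(\mathcal{X})$ of $\mathcal{X}$-complexes — to be the torsion class of a torsion theory. So the first thing I would do is extract this torsion machinery. Closure under extensions gives closure under finite coproducts, and together with closure under direct limits this yields closure under arbitrary coproducts (a coproduct is the directed colimit of its finite subcoproducts); combined with closure under quotients, it follows that every module $N$ has a largest submodule $\tau(N)\in\mathcal{X}$, namely the image of $\bigoplus_S S\to N$ as $S$ runs over the $\mathcal{X}$-submodules of $N$, and that $\tau\big(N/\tau(N)\big)=0$. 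Running the same formal argument termwise shows $C(\mathcal{X})$ is closed under quotients, extensions and direct limits in the category of complexes, that every complex $D$ has a largest $\mathcal{X}$-subcomplex $\tau(D)$ (computed termwise, so that it really is a subcomplex) with $\tau\big(D/\tau(D)\big)=0$, and that $\mathrm{Hom}(W,F)=0$ whenever $W\in C(\mathcal{X})$ and $F$ is ``$\mathcal{X}$-torsion-free'' (meaning $\tau(F)=0$), since any such morphism has image in $C(\mathcal{X})$ by closure under quotients, hence is zero.

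Next, given a complex $B$, I would take an injective envelope $B\hookrightarrow E(B)$ (every complex has one, as used already in the proof of Lemma \ref{1.27}) and set $T:=q^{-1}\big(\tau(E(B)/B)\big)$, where $q:E(B)\twoheadrightarrow E(B)/B$ is the quotient. Then $B\subseteq T\subseteq E(B)$, $T/B=\tau(E(B)/B)\in C(\mathcal{X})$, and $E(B)/T\cong\big(E(B)/B\big)\big/\tau(E(B)/B)$ is $\mathcal{X}$-torsion-free. I claim $B\hookrightarrow T$ is an $\mathcal{X}$-injective envelope. For $\mathcal{X}$-injectivity of $T$: apply $\mathrm{Hom}(W,-)$ to $0\to T\to E(B)\to E(B)/T\to 0$ to get the exact stretch $\mathrm{Hom}(W,E(B)/T)\to\mathrm{Ext}^1(W,T)\to\mathrm{Ext}^1(W,E(B))$, whose outer terms vanish — the first because $E(B)/T$ is $\mathcal{X}$-torsion-free, the second because $E(B)$ is injective — so $\mathrm{Ext}^1(W,T)=0$ for every $W\in C(\mathcal{X})$. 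For the preenvelope property: given $f:B\to C$ with $C$ an $\mathcal{X}$-injective complex, the sequence $0\to B\to T\to T/B\to 0$ together with $\mathrm{Ext}^1(T/B,C)=0$ forces $\mathrm{Hom}(T,C)\to\mathrm{Hom}(B,C)$ to be onto, so $f$ extends over $T$, which is exactly the commuting diagram in Definition \ref{1.3}.

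Finally I would verify left-minimality, i.e.\ that every $\varphi:T\to T$ with $\varphi|_B=\mathrm{id}_B$ is an automorphism. Since $B\subseteq E(B)$ is essential, so is $B\subseteq T$; hence $\ker\varphi\cap B=0$ gives $\ker\varphi=0$, so $\varphi$ is monic. Then $T/\varphi(T)$ is a quotient of $T/B\in C(\mathcal{X})$, so it lies in $C(\mathcal{X})$, while $\varphi(T)\cong T$ is $\mathcal{X}$-injective; therefore $0\to\varphi(T)\to T\to T/\varphi(T)\to 0$ splits, and the resulting complement of $\varphi(T)$ in $T$ meets $B\subseteq\varphi(T)$ trivially, hence is $0$ by essentiality, so $\varphi$ is onto. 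Thus $B\hookrightarrow T$ is an $\mathcal{X}$-injective envelope. The step I expect to need the most care is the very first one: teasing out of ``closed under extensions, quotients and direct limits'' that $\tau$ is a well-defined idempotent radical on modules and, termwise, on complexes (in particular that the termwise torsion submodules assemble into a subcomplex). Everything after that is the long exact sequence of $\mathrm{Ext}$ plus essentiality of the injective envelope.
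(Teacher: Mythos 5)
Your proposal is correct and follows essentially the same route as the paper: both constructions take the largest intermediate subcomplex $T$ of $E(B)$ with $T/B\in C(\mathcal{X})$, prove $\mathcal{X}$-injectivity of $T$ from the injectivity of $E(B)$ together with the fact that $E(B)/T$ admits no nonzero $\mathcal{X}$-subcomplex, verify the preenvelope property from $T/B\in C(\mathcal{X})$, and finish with the same essentiality argument for minimality. The only differences are presentational --- you build $T$ via an explicit torsion radical where the paper invokes Zorn's lemma (using closure under direct limits for the chain condition), and you obtain the splitting from the long exact sequence of $\mathrm{Ext}$ where the paper chases a pushout diagram into $E(T)$.
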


\begin{proof} We know that $B$ has an injective envelope $E$. Let
$S= \{A: B \subseteq A \subseteq E$ $and$ $\frac{A}{B}$ $is$ $ an$
$\mathcal{X}-complex \} \neq \emptyset$. Let $S'$ be an ascending
chain. Then $\frac{\cup_{N_{i}\in
S'}N^{k}_{i}}{B^{k}}=\cup_{N_{i}\in
S'}\frac{N^{k}_{i}}{B^{k}}=lim\frac{N^{k}_{i}}{B^{k}}$ is in the
class $\mathcal{X}$. So $S$ has a maximal element, say $T$. We shall
prove that $T$ is an $\mathcal{X}-injective$ complex. It is enough
to show that any exact sequence $0 \longrightarrow T \longrightarrow
Y \longrightarrow C \longrightarrow 0$ with $C \in
\mathcal{X}$-complex is split. We have the following commutative
diagram;

\[\begin{diagram}
\node{0} \arrow{e} \node{T} \arrow{e} \arrow{s} \node{Y} \arrow{e}
\arrow{s,r} {\alpha} \node{C} \arrow{e}
\arrow{s,r} {\beta} \node{0} \\
\node{0} \arrow{e} \node{T} \arrow{e,t} {i} \node{E(T)} \arrow{e,t}
{\pi} \node{\frac{E(T)}{T}} \arrow{e}
\node{0} \\
\end{diagram}\]

Let $\beta(C)=\frac{K}{T}$. So by
$\frac{K}{T}\cong\frac{\frac{K}{B}}{\frac{T}{B}}$, we say that
$\frac{K}{B}$ is an $\mathcal{X}-complex$. Since $T$ is a maximal
element of $S$, $\beta(C)=0$, and hence
${\pi}{\alpha(Y)}={\frac{\alpha(Y)+{T}}{T}}=0$, and so
$\alpha(Y)\subseteq T$. Therefore $0 \longrightarrow T
\longrightarrow Y \longrightarrow C \longrightarrow 0$ is split
exact and hence $T$ is an $\mathcal{X}-injective$ complex. Moreover
$T$ is an $\mathcal{X}-injective$ $preenvelope$ of $B$ by the
following diagram;

\[\begin{diagram} \node{0} \arrow{e} \node{B} \arrow{e} \arrow{s}
\node{T} \arrow{sw,..} \\ \node[2]{Y}
\end{diagram}\]

\noindent where $Y$ is an $\mathcal{X}-injective$ complex and
$\frac{T}{B} \in \mathcal{X}$ and also if,

\[\begin{diagram} \node{0} \arrow{e} \node{B} \arrow{e,t} {\alpha}
\arrow{s,r} {\alpha} \node{T} \arrow{sw,t,..} {\beta} \\
\node[2]{T}
\end{diagram}\]

\noindent where ${\beta}{\alpha}={\alpha}$, then $\beta$ is $1-1$
since $B \subset^{ess}T$, and so $\beta(T) \cong T$. If $\beta(T)
\neq T$, then $\beta(T)$ is an $\mathcal{X}-injective$ complex such
that $B \subseteq \beta(T) \subset T \subseteq E$. So we have
$0\longrightarrow T \overset{\beta} \longrightarrow T
\longrightarrow \frac{T}{\beta(T)} \longrightarrow 0$ split exact
sequence and $\beta(T) \subset^{ess} T$, so $\beta(T)=T$.
\end{proof}

\begin{thm} \label{1.29} $X$ is essentially contained in a minimial $\mathcal{X}-injective$ complex X'
with $X \subset^{ess} X'$.
\end{thm}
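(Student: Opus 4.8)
The plan is to take $X'$ to be the $\mathcal{X}$-injective envelope of $X$ constructed in Theorem \ref{1.28} and then to verify that it is both essential over $X$ and minimal. Recall that this $X'$ arises as a maximal element $T$ of the family $S=\{A : X\subseteq A\subseteq E(X),\ A/X \text{ an } \mathcal{X}\text{-complex}\}$, where $E(X)$ is an injective envelope of $X$, and that Theorem \ref{1.28} shows $T$ is an $\mathcal{X}$-injective complex with $T/X$ an $\mathcal{X}$-complex. So I set $X'=T$.

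The first step is to record that $X\subset^{ess}X'$. Since $X\subseteq X'\subseteq E(X)$ and $X$ is essential in its injective envelope $E(X)$, any nonzero subcomplex $K$ of $X'$ is a nonzero subcomplex of $E(X)$, hence $K\cap X\neq 0$; this is exactly the statement that $X$ is essential in $X'$.

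For minimality I would argue as follows. Suppose $Z$ is an $\mathcal{X}$-injective complex with $X\subseteq Z\subseteq X'$. By the third isomorphism theorem $X'/Z$ is a quotient of $X'/X$, and since $X'/X$ is an $\mathcal{X}$-complex and $\mathcal{X}$ is closed under quotients, $X'/Z$ is again an $\mathcal{X}$-complex. Lemma \ref{1.27} (applied with the roles of $X$ and $Y$ there played by $Z$ and $X'$) then shows that $Z$ is a direct summand of $X'$, say $X'=Z\oplus W$. On the other hand, from $X\subseteq Z\subseteq X'$ and $X\subset^{ess}X'$ we get $Z\subset^{ess}X'$, so $Z\cap W\neq 0$ unless $W=0$. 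Hence $W=0$ and $Z=X'$, which proves that $X'$ is a minimal $\mathcal{X}$-injective complex essentially containing $X$. Uniqueness up to isomorphism over $X$ of such a complex can be read off from the last paragraph of the proof of Theorem \ref{1.28}: any morphism between two minimal $\mathcal{X}$-injective complexes essentially containing $X$ that restricts to the identity on $X$ is injective by essentiality and surjective by the splitting coming from Lemma \ref{1.27}, hence an isomorphism.

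I expect the only delicate point to be the interplay between ``direct summand'' and ``essential'' inside the category of complexes: one must be sure that the splitting in Lemma \ref{1.27} really produces a complement $W$ as a subcomplex of $X'$, and that essentiality of $X$ in $X'$ (read degreewise, equivalently as an essential monomorphism of complexes) forces $Z\cap W\neq 0$ whenever $W\neq 0$. Once this is granted, the rest is bookkeeping on top of Theorem \ref{1.28} and Lemma \ref{1.27}.
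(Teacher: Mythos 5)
Your proof is correct (granting the paper's earlier results), but it takes a genuinely different route from the paper's. You take $X'$ to be the $\mathcal{X}$-injective envelope $T$ produced in Theorem \ref{1.28} and prove minimality directly: if $X\subseteq Z\subseteq X'$ with $Z$ an $\mathcal{X}$-injective complex, then $X'/Z$ is a quotient of the $\mathcal{X}$-complex $X'/X$, hence itself an $\mathcal{X}$-complex, so Lemma \ref{1.27} splits off $Z$ as a direct summand, and essentiality of $X$ in $X'\subseteq E(X)$ kills the complement. The paper instead applies Zorn's lemma to \emph{descending} chains in $S=\{A: X\subseteq A\subseteq E,\ A\ \mathcal{X}\text{-injective}\}$, arguing via a pushout and the five lemma that the intersection of such a chain is again $\mathcal{X}$-injective, and extracting a minimal element; it never invokes Theorem \ref{1.28}. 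The trade-off: your argument gives a concrete identification of $X'$ (it is the envelope of Theorem \ref{1.28}) and essentially yields uniqueness, but it silently imports the hypotheses of Theorem \ref{1.28} --- that $\mathcal{X}$ be closed under extensions, quotients and direct limits --- which the statement of Theorem \ref{1.29} does not impose (closure under quotients is used exactly where you pass from $X'/X$ to $X'/Z$). The paper's argument needs no such closure assumptions, since $S\neq\emptyset$ is automatic ($E(X)$ is injective, hence $\mathcal{X}$-injective), though its chain-intersection step is left rather sketchy about why the splittings of the $0\to A_\alpha\to B_\alpha\to C\to 0$ are compatible across the chain. If you state the closure hypotheses explicitly, your version is a clean and arguably more informative alternative proof.
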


\begin{proof} We know that every complex has an
injective envelope. Let $S= \{A: X \subseteq A \subseteq E$ $and$
$A$ $\mathcal{X}- injective$ $complex\} \neq \emptyset$ and $S'$ be
a descending chain of $S$. We will show that $\cap_{A_{\alpha} \in
S'}\{A_{\alpha}: \alpha \in I\}$ is an $\mathcal{X}-injective$
complex. Using this pushout diagram we have the following diagram
where $C$ with $\mathcal{X}-complex$,

\[\begin{diagram}
\node{0} \arrow{e} \node{\cap A_{\alpha}} \arrow{e,t} {\beta}
\arrow{s,r} {\theta_{\alpha}} \node{Y} \arrow{e} \arrow{s,r} {\phi}
\node{C} \arrow{e}
\arrow{s} \node{0} \\
\node{0} \arrow{e} \node{A_{\alpha}} \arrow{e,t} {\gamma_{\alpha}}
\node{B_{\alpha}} \arrow{e} \node{C} \arrow{e}
\node{0} \\
\end{diagram}\]

\noindent Then the bottom row is split exact. So $0 \longrightarrow
\cap A_{\alpha}  \longrightarrow \cap B_{\alpha} \longrightarrow C
\longrightarrow 0$ is split exact. We have the following diagram;

\[\begin{diagram}
\node{0} \arrow{e} \node{\cap A_{\alpha}} \arrow{e,t} {\beta}
\arrow{s} \node{Y} \arrow{e} \arrow{s,r} {\phi} \node{C} \arrow{e}
\arrow{s} \node{0} \\
\node{0} \arrow{e} \node{\cap A_{\alpha}} \arrow{e,t} {\gamma}
\node{\cap B_{\alpha}} \arrow{e} \node{C} \arrow{e}
\node{0} \\
\end{diagram}\]

\noindent By five lemma $\phi$ is an isomorphism. So
$0\longrightarrow \cap A_{\alpha}  \longrightarrow Y \longrightarrow
C \longrightarrow 0$ is split exact. So $S$ has a minimal element,
say $X'$.
\end{proof}

\end{document}